\def\HH{\EuFrak H}
\def\RR{\mathbb R}
\def\R{\mathbb R}
\def\E{\mathbb E}
\def\EE{\mathbb E}
\def\DD{\mathbb D}
\def\Z{\mathbb Z}
\def\Var{{\rm Var}}
\def\dom{{\rm Dom}}
\newtheorem{prop}{Proposition}[section]
\newtheorem{proposition}{Proposition}[section]
\newtheorem{lemma}[prop]{Lemma}
\newtheorem{theorem}[prop]{Theorem}
\newtheorem{remark}[prop]{Remark}
\numberwithin{equation}{section}
\title[Improved rates in the Breuer-Major theorem]{ The Breuer-Major Theorem in total variation: \\ improved rates under minimal regularity}
\author{Ivan Nourdin}
\address{Ivan Nourdin, Universit\'e du Luxembourg, 
Unit\'e de Recherche en Math\'ematiques,
Maison du Nombre,
6 avenue de la Fonte,
L-4364 Esch-sur-Alzette,
Grand Duch\'e du Luxembourg}
\email{ivan.nourdin@uni.lu} 
\thanks{I. Nourdin was supported by the FNR grant APOGee at Luxembourg University, D. Nualart by the NSF grant  DMS 1811181, and G. Peccati
 by the FNR grant FoRGES (R-AGR-3376-10) at Luxembourg University.}
\author{David Nualart} 
\address{David Nualart,
Department of Mathematics, University of Kansas, 405 Snow Hall, Lawrence, Kansas, 66045, USA}
\email{nualart@ku.edu}
\author{Giovanni Peccati}
\address{Giovanni Peccati, Universit\'e du Luxembourg, 
Unit\'e de Recherche en Math\'ematiques,
Maison du Nombre,
6 avenue de la Fonte,
L-4364 Esch-sur-Alzette,
Grand Duch\'e du Luxembourg}
\email{giovanni.peccati@uni.lu}
\date{\today}
\begin{document}
  
  \maketitle 
   
   \begin{abstract}{In this paper we prove an estimate for the total variation distance, in the framework of the
   Breuer-Major theorem, using the Malliavin-Stein method,  assuming the underlying function $g$ to be once weakly differentiable with $g$ and $g'$ \textcolor{black}{having} finite moments of order four with respect to the standard Gaussian density. This result is proved by a combination of Gebelein's inequality and some novel estimates involving Malliavin operators.

\noindent{{\bf Keywords: } Breuer-Major theorem; Integration by Parts; Rate of Convergence; Malliavin-Stein approach.%\\
%\\
%{\it AMS 2010 Classification: } 
}}
\end{abstract}
   
 \section{Introduction}
 
 \subsection{Overview and main findings}
 Let $X=\{ X_n , n\ge 0\}$  be a real-valued centered stationary Gaussian sequence with unit variance, that we assume to be defined on an appropriate probability space $(\Omega, \mathscr{F}, \mathbb{P})$. For $k \in \Z$, set $\rho(k) := \E(X_0 X_k)$ if $k\ge 0$, and $\rho(k) := \rho(-k)$ if $k<0$. 
  Denoting by $\gamma(dx) = (2\pi)^{-1/2} e^{-x^2/2}dx$ the standard Gaussian measure on the real line,  we say that a function $g \in L^2( \mathbb{R}, \gamma)  =: L^2(\gamma)$ has {\bf Hermite rank} $d\ge 1$ if
	\begin{equation}\label{hexp}
g(x)= \sum_{q=d} ^\infty c_q  H_q(x),
\end{equation}
 where $c_d \not =0$, $H_q$ is the $q$th Hermite polynomial (to be formally defined in Section \ref{ss:mall}), and the series converges in $L^2(\gamma)$. The forthcoming Theorem \ref{t:cbm} --- known as the {\bf Breuer-Major Theorem}  (see \cite{bm}, as well as \cite{Ta}) --- establishes a sufficient condition for the sequence  
 \begin{equation}\label{fn}
 F_n := \frac{1}{\sqrt{n}} \sum_{i=1}^n g(X_i) , \quad n\geq 1,
\end{equation} 
to verify a Central Limit Theorem (CLT). 

\smallskip

\noindent{\bf Remark on notation.} From now on, we write $N(\mu, \tau^2$) to indicate a generic random variable with mean $\mu$ and variance $\tau^2$. We also put $N_\tau:=N(0,\tau^2)$ and
for $\tau =1$, $N=N_1$ denotes a standard normal Gaussian variable.  The symbol $\Rightarrow$ denotes convergence in distribution of random elements. Given two real-valued random variables $X,Z$, the {\bf total variation distance} between the distributions of $X$ and $Z$ is defined as
\begin{equation}\label{e:dtv}
d_{\rm TV}(X,Z) := \sup_{A} \left| \,  \mathbb{P} (X\in A)- \mathbb{P} (Z\in A) \, \right|,
\end{equation}
where the supremum runs over the class of all Borel subsets $A$ of $\R$. Depending on notational convenience, given a numerical sequence $\{\alpha(k) : k\in \Z \}$, we will often write $\sum \alpha(k)$ to indicate the full sum $\sum_{k\in \Z} \alpha(k)$, whenever it is well-defined. Finally, given a random variable $Z$, we use the notation $\|Z\|_q = \E[|Z|^q]^{1/q}$, for every $q>1$. 

\smallskip

 \begin{theorem}[\bf Breuer-Major]\label{t:cbm} Let $g\in L^2(\gamma)$ have Hermite rank $d\geq 1$, and assume moreover that \begin{equation} \label{h1}
\sum_{j \in \Z} |\rho(j)|^d < \infty.
\end{equation}
Then, as $n\to\infty$, 
\begin{equation}\label{e:cbmc}
F_n \Rightarrow N(0, \sigma^2),
\end{equation}
where
\begin{equation}\label{bm.sig}
\sigma^2 := \sum_{q=d}^\infty q! c_q^2 \sum_{k \in \Z} \rho(k)^q < \infty.
\end{equation}
\end{theorem}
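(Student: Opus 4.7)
My plan is to combine a Fourth Moment / contraction argument for fixed Wiener chaoses with an $L^2$ truncation over the Hermite expansion. Starting from $g = \sum_{q \geq d} c_q H_q$, I would first write the chaos decomposition
$$F_n = \sum_{q \geq d} c_q F_n^{(q)}, \qquad F_n^{(q)} := \frac{1}{\sqrt{n}} \sum_{i=1}^n H_q(X_i),$$
so that each $F_n^{(q)}$ belongs to the $q$-th Wiener chaos of the isonormal Gaussian process generated by $X$ and distinct summands are orthogonal in $L^2$. Using the identity $\E[H_p(X_i) H_q(X_j)] = \delta_{p,q}\,q!\,\rho(i-j)^q$ together with $|\rho(k)|^q \leq |\rho(k)|^d$ for $q\geq d$, a double application of dominated convergence (once in $k$ and once in $q$, using $\sum_q c_q^2 q! = \|g\|_{L^2(\gamma)}^2 < \infty$) shows that $\Var(F_n) \to \sigma^2 < \infty$.

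Next, for fixed $Q \geq d$, set $F_n^{[Q]} := \sum_{q=d}^{Q} c_q F_n^{(q)}$ and $R_{n,Q} := F_n - F_n^{[Q]}$. By orthogonality of chaoses,
$$\Var(R_{n,Q}) \leq \Bigl(\sum_{q>Q} c_q^2 q!\Bigr) \sum_{k\in\Z} |\rho(k)|^d,$$
which is bounded uniformly in $n$ and vanishes as $Q\to\infty$. A standard triangle (Slutsky-type) argument then reduces the theorem to establishing, for every fixed $Q$, the multivariate CLT $(F_n^{(d)},\ldots,F_n^{(Q)})\Rightarrow (N^{(d)},\ldots,N^{(Q)})$, where the limit vector has independent components $N^{(q)}\sim N(0, q!\sum_k\rho(k)^q)$.

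For this multivariate CLT, I would write each component as a multiple Wiener--It\^o integral $F_n^{(q)} = I_q(f_n^{(q)})$ with kernel $f_n^{(q)} = n^{-1/2}\sum_{i=1}^n e_i^{\otimes q}$, for an isonormal family $(e_i)$ satisfying $\langle e_i, e_j \rangle = \rho(i-j)$. By the Peccati--Tudor Fourth Moment Theorem, joint Gaussian convergence of vectors from finitely many Wiener chaoses follows from (i) convergence of the covariance matrix---already obtained above, noting that cross-chaos covariances vanish identically---and (ii) the vanishing as $n\to\infty$ of every contraction norm
$$\|f_n^{(q)} \otimes_r f_n^{(q)}\|^2 = \frac{1}{n^2}\sum_{i,j,k,l=1}^n \rho(i-j)^r \rho(k-l)^r \rho(i-k)^{q-r}\rho(j-l)^{q-r},$$
for $d \leq q \leq Q$ and $1 \leq r \leq q-1$.

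The main obstacle is this contraction estimate, since the individual exponents $r$ and $q-r$ may both be strictly smaller than $d$, so that $\sum_k|\rho(k)|^r$ need not be summable. The strategy is to change variables, writing $a = i-j$, $b = k-l$, $c = i-k$, so that the quadruple sum reduces to $n$ times a triple sum, giving
$$\|f_n^{(q)}\otimes_r f_n^{(q)}\|^2 \leq \frac{1}{n}\sum_{a,b,c\in\Z}|\rho(a)|^r|\rho(b)|^r|\rho(c)|^{q-r}|\rho(c+b-a)|^{q-r}.$$
A careful application of H\"older's inequality (or of Young's convolution inequality), exploiting that the four exponents sum to $2q \geq 2d$ and can therefore be redistributed into $\ell^{d}$-norms of $|\rho|$, bounds the triple sum by a constant multiple of $\bigl(\sum_k|\rho(k)|^d\bigr)^2 < \infty$. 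The resulting $O(1/n)$ estimate vanishes, and combined with the truncation of the second step this yields the stated CLT.
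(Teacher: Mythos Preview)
The paper does not prove Theorem~\ref{t:cbm}: it is quoted as the classical Breuer--Major result with references to \cite{bm} and \cite{Ta}, and the paper's own contribution is the quantitative total-variation bound of Theorem~\ref{thm1}. Your outline follows the now-standard Fourth Moment route (chaos truncation plus the Peccati--Tudor criterion), which is indeed a correct and self-contained way to obtain the qualitative CLT. The variance computation, the truncation/Slutsky step, and the reduction to contraction norms are all fine.

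The genuine gap is in the last paragraph. Your claim that the triple sum
\[
\sum_{a,b,c}|\rho(a)|^{r}|\rho(b)|^{r}|\rho(c)|^{q-r}|\rho(c+b-a)|^{q-r}
\]
is bounded by a constant multiple of $\bigl(\sum_{k}|\rho(k)|^{d}\bigr)^{2}$, and hence that $\|f_n^{(q)}\otimes_r f_n^{(q)}\|^{2}=O(1/n)$, is false in general. Take the smallest case $d=q=2$, $r=1$: the triple sum equals $\|\,|\rho|\!*\!|\rho|\,\|_{\ell^2}^{2}$, and Young's inequality only controls this by $\|\rho\|_{\ell^{4/3}}^{4}$, not by $\|\rho\|_{\ell^2}^{4}$. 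For $\rho(k)\sim |k|^{-\alpha}$ with $\tfrac12<\alpha\le \tfrac34$ (realised e.g.\ by fractional Gaussian noise) one has $\sum\rho(k)^{2}<\infty$ but the triple sum diverges, so no $O(1/n)$ bound can hold. The heuristic ``the exponents sum to $2q\ge 2d$, so redistribute into $\ell^d$-norms'' breaks down precisely because Young's convolution inequality with target $\ell^2$ imposes $\tfrac1p+\tfrac1{p'}=\tfrac32$, and one cannot always choose $p,p'$ with $rp\ge d$ and $(q-r)p'\ge d$ when $q<\tfrac{3d}{2}$.

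What is true---and is all you need---is that the contraction norm is $o(1)$, not $O(1/n)$. With the same change of variables and Young's inequality you get
\[
\|f_n^{(q)}\otimes_r f_n^{(q)}\|^{2}\;\le\;\frac{1}{n}\Bigl(\sum_{|k|<n}|\rho(k)|^{rp}\Bigr)^{2/p}\Bigl(\sum_{|k|<n}|\rho(k)|^{(q-r)p'}\Bigr)^{2/p'},
\]
and then a Ces\`aro-type splitting (fix $m$, separate $|k|<m$ from $m\le|k|<n$, use H\"older against $\ell^d$ on the tail, let $n\to\infty$ then $m\to\infty$) shows the right-hand side tends to zero; this is exactly the mechanism of Lemma~\ref{l:simple} in the Appendix, suitably adapted with $\ell^d$ in place of $\ell^2$. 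The inequality $r+(q-r)=q\ge d$ is what makes the exponents balance in that argument. So your strategy is right, but the final estimate has to be stated and proved as $o(1)$ via truncation, not as a uniform bound on the full triple sum.
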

 
 Theorem \ref{t:cbm} is one of the staples of modern Gaussian analysis, with far-reaching applications ranging from stochastic geometry to mathematical statistics and information theory --- see e.g. \cite{Doukhan, np-book, PiTa, Tudor} for a general discussion, as well as \cite{BBNP, CNN, CS, NourdinNualartPTRF, np-book, NP, NPP, NourdinPeccatiRossi} for a sample of recent extensions and ramifications. 
 
 \smallskip
 
Using the fact that the limiting random variable $N(0,\sigma^2)$ has a density, it is straightforward to deduce from the second Dini's theorem that the convergence \eqref{e:cbmc} always takes place in the sense of the {\bf Kolmogorov distance}, that is:  with the notation $N_\sigma = N(0,\sigma^2)$,
$$
\sup_{t\in \R} \left | \, \mathbb{P}[F_n \leq  t] - \mathbb{P}[N_\sigma \leq t] \, \right| \longrightarrow 0, \quad n\to\infty. 
$$
On the other hand, determining wether \eqref{e:cbmc} takes place in the sense of the total variation distance \eqref{e:dtv} is a more delicate matter, for which no exhaustive criterion is currently known. The difficulty of such an issue is demonstrated by considering the following two facts, corresponding to choices of the function $g$ in the Breuer-Major Theorem yielding contrasting behaviours with respect to $d_{\rm TV}$:

\begin{itemize}

\item[\bf (a)] according to the main results of \cite{NouPol}, if $g$ in Theorem \ref{t:cbm} is a polynomial, then necessarily $d_{\rm TV}(F_n, N_\sigma) \to 0$, as $n\to \infty$;

\item[\bf (b)] if $g$  takes values in a discrete set, then (trivially) $d_{\rm TV}(F_n, N_\sigma) =1$ for every $n$.

\end{itemize}

\smallskip

The aim of the present paper is to deduce new explicit bounds on the total variation distance 
\begin{equation} \label{yn}
 Y_n:=\frac {F_n } {\sqrt{\Var (F_n)} } ,
\end{equation}
and a standard normal random variable $ N = N(0,1)$, in the case where $g$ has Hermite rank $d=2$. We will see that our estimates imply minimal regularity conditions on $g$, in order for the limiting relation $d_{\rm TV}(Y_n, N )\rightarrow 0$ (or, equivalently, $d_{\rm TV}(F_n, N(0,\sigma^2))\to 0$) to take place. Moreover, under comparable regularity assumptions on $g$, the rates of convergence provided by our bounds are better than or commmensurate to the \textcolor{black}{best estimates to date,} obtained in \cite{KN, NPY, NZ}. The main tool exploited in our analysis is a non-trivial combination of {\bf Gebelein's inequality} (recalled in Section \ref{ss:geb} below, and already used in \cite{NPY}), and some novel estimates involving Malliavin operators --- see e.g. the forthcoming Lemma \ref{lem1}. 

\smallskip

Our main findings are contained in the following statement, in which we use the notation $\DD^{k,p} (\RR, \gamma)$, $p\geq 1$, $k=1,2 \dots$,   to denote the Sobolev space given by the closure of the class of polynomials mappings $ q : \RR\to\RR$ with respect to the norm
$$
\| q \|_{k,p} = \left | \int_\RR \left( | q(x) |^p +\sum_{i=1}^k | D^i q(x)|^p  \right)\,\gamma(dx)\right|^{1/p},
$$
where $D^i$ denotes the $i$th derivative of $q$ as a function of $x$.	 

The following is the main result of this paper.
 
 \begin{theorem} \label{thm1}
Assume that $g\in L^2(\RR,\gamma)$ has Hermite rank $d= 2$ and belongs to $ \DD^{1,4}(\RR,\gamma)$. Suppose that   \eqref{h1} holds and that $\sigma^2$ defined by \eqref{bm.sig} is strictly positive.
Let $Y_n$ be the random variable defined in \eqref{yn}.  Then, there exists a constant $C>0$ independent of $n$ such that
	\begin{align} \label{ecu1}
				   d_{\rm TV}(Y_n , N)  \leq  C n^{-\frac{1}{2}} \left(\sum_{|k| \leq n} |\rho(k)|\right)^{\frac{1}{2}}  + C n^{-\frac{1}{2}} \left(\sum_{|k| \leq n} |\rho(k)|^{\frac{4}{3}}\right)^{\frac 32 }, \quad n\geq 1.
		\end{align}
\end{theorem}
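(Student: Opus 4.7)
The strategy is the Malliavin--Stein method. Realize $\{X_n\}_{n\in\Z}$ as $X_n = I_1(e_n)$ on an isonormal Gaussian space over a Hilbert space $\mathfrak H$ with $\langle e_n,e_m\rangle_{\mathfrak H} = \rho(n-m)$. The general Malliavin--Stein bound combined with Cauchy--Schwarz gives
\[
d_{\rm TV}(Y_n, N) \le \tfrac{2}{\sigma_n^2}\,\EE|\sigma_n^2 - \Psi_n| \le \tfrac{2}{\sigma_n^2}\sqrt{\Var(\Psi_n)},
\]
where $\sigma_n^2 := \Var(F_n) \to \sigma^2 > 0$ by Theorem \ref{t:cbm}. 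Setting $\tilde g(x) := \sum_{q\ge 2}(c_q/q)H_q(x)$, so that $-L^{-1}g(X_i) = \tilde g(X_i)$ and hence $-DL^{-1}F_n = n^{-1/2}\sum_i \tilde g'(X_i)\,e_i$, a short calculation gives
\[
\Psi_n := \langle DF_n, -DL^{-1}F_n\rangle_{\mathfrak H} = \tfrac{1}{n}\sum_{i,j=1}^n g'(X_i)\tilde g'(X_j)\,\rho(i-j),\qquad \EE[\Psi_n] = \sigma_n^2.
\]
It therefore suffices to bound $\sqrt{\Var(\Psi_n)}$ by the right-hand side of \eqref{ecu1}.

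The next step is to decompose $\Psi_n - \sigma_n^2$ via its Wiener chaos expansion $\Psi_n - \sigma_n^2 = \sum_{q\ge 1}\mathcal J_q$, where $\mathcal J_q$ denotes the projection onto the $q$-th chaos; by orthogonality, $\Var(\Psi_n) = \sum_{q\ge 1}\|\mathcal J_q\|_2^2$. For the first chaos, the product formula for multiple Wiener integrals gives $\mathcal J_1 = \tfrac{1}{n}\sum_{i=1}^n a_i\,X_i$, where each $a_i$ has the form $\sum_j \rho(i-j)\bigl[\alpha(\rho(i-j)) + \beta(\rho(j-i))\bigr]$ for explicit power series $\alpha,\beta$ satisfying $|\alpha(\rho)|, |\beta(\rho)| \lesssim |\rho|$ on $[-1,1]$. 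Uniformity $\sup_{i,n}|a_i| \le C\sum_k \rho(k)^2 < \infty$ follows from the assumption $d=2$, which yields
\[
\|\mathcal J_1\|_2^2 = \tfrac{1}{n^2}\sum_{i,i'} a_i a_{i'}\,\rho(i-i') \le \tfrac{C}{n}\sum_{|k|\le n}|\rho(k)|,
\]
whose square root is the first summand in \eqref{ecu1}.

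For the higher-chaos part, the dominant term is the second-chaos contribution $2c_2^2\,I_2(f_n)$ with kernel $f_n := n^{-1}\sum_{i,j}\rho(i-j)\,e_i \otimes e_j$. By the Wiener isometry and the change of variables $a = i-j$, $b = k-l$, $c = i-k$,
\[
\|I_2(f_n)\|_2^2 = 2\|f_n\|_{\mathfrak H^{\otimes 2}}^2 = \tfrac{2}{n^2}\sum_{i,j,k,l=1}^n\rho(i-j)\rho(k-l)\rho(i-k)\rho(j-l) \le \tfrac{2}{n}\,\|\rho * \rho\|_{\ell^2}^2,
\]
and Young's convolution inequality $\|\rho * \rho\|_{\ell^2} \le \|\rho\|_{\ell^{4/3}}^2$ produces the second summand in \eqref{ecu1}. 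The main technical obstacle will be to control the higher-order corrections to $\mathcal J_2$ together with all projections $\mathcal J_q$ for $q \ge 3$ under only the weak regularity assumption $g \in \DD^{1,4}(\RR, \gamma)$, since a naive multivariate Gaussian integration-by-parts would require $g''$. This is precisely the role of Lemma \ref{lem1}: by combining Gebelein's inequality (which exploits only the Hermite rank of $g'$ and $\tilde g'$) with identities involving the Mehler semigroup that absorb the missing second derivative, it delivers the Malliavin-type covariance estimates for the products $g'(X_i)\tilde g'(X_j)$ needed to dominate those remaining contributions by the two terms already obtained. Collecting all pieces then yields \eqref{ecu1}.
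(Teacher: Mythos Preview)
Your setup is correct and coincides with the paper's: since $\tilde g'=g_1$, your $\Psi_n$ is exactly the paper's $\Phi_n=\langle DF_n,u_n\rangle_{\mathfrak H}$, and your first-chaos estimate $\|\mathcal J_1\|_2^2\le Cn^{-1}\sum_{|k|\le n}|\rho(k)|$ is precisely the paper's bound on $\|\E v_n\|_{\mathfrak H}^2$ (indeed $\E(-DL^{-1}\Phi_n)$ is the first-chaos kernel). Your Young-inequality computation also correctly isolates the leading $H_2$ piece of the second chaos.

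The genuine gap is the treatment of $\sum_{q\ge 2}\|\mathcal J_q\|_2^2$. A chaos-by-chaos expansion of $g'(X_i)\tilde g'(X_j)$ produces infinitely many contraction terms involving all Hermite coefficients of $g$, and there is no visible mechanism by which Lemma~\ref{lem1} --- which bounds $\|L^{-1}(G\,DF)\|_r$ and $\|L^{-1}(G\,D^2F)\|_r$ --- enters such a computation; your description of it as ``identities involving the Mehler semigroup that absorb the missing second derivative'' does not match its content. The paper does \emph{not} decompose $\Phi_n$ into chaoses. Instead it writes $\Phi_n-\E\Phi_n=\delta(v_n)$ with $v_n=-DL^{-1}\Phi_n$ and uses the Poincar\'e-type bound $\Var(\Phi_n)\le\|\E v_n\|_{\mathfrak H}^2+2\,\E\|Dv_n\|_{\mathfrak H^{\otimes 2}}^2$. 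Since $Dv_n=-D^2L^{-1}\Phi_n=-KL^{-1}D^2\Phi_n$ for a bounded multiplier $K$, everything reduces to $\E\|L^{-1}D^2\Phi_n\|_{\mathfrak H^{\otimes 2}}^2$, and $D^2\Phi_n$ is a \emph{finite} sum over $k=0,1,2$ of terms $g^{(k+1)}(X_i)(g_1)^{(2-k)}(X_j)\,e_i^{\otimes k}\otimes e_j^{\otimes(2-k)}\rho(i-j)$. It is at this point --- after rewriting $g^{(k+1)}(X_i)=\langle D^k g'(X_i),e_i^{\otimes k}\rangle_{\mathfrak H^{\otimes k}}$ --- that Lemma~\ref{lem1} is invoked (case $k=1$ directly, cases $k=0,2$ combined with Gebelein to gain the extra $|\rho|$), and the whole argument is first carried out for $g\in\DD^{3,4}(\R,\gamma)$ and then passed to the limit in $\DD^{1,4}$. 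Without this second-derivative reorganization via $L^{-1}D^2$, your sketch has no route from the chaos expansion to the required estimate under mere $\DD^{1,4}$ regularity.
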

 
Note that the right-hand side of \eqref{ecu1} (as well as those of the forthcoming bounds \eqref{e:npy} and \eqref{e:opt})  converges to zero, as $n\to \infty$, by virtue of Lemma \ref{l:simple}. 

\subsection{Comparison with existing results}

We will now compare Theorem \ref{thm1} with three relevant papers in the recent literature. Such a comparison exploits the log-convexity of $\ell^p$ norms, see e.g. \cite[Lemma 1.11.5]{T}:
 \begin{equation}\label{e:comp}
 \left(\sum_{|k| \leq n} |\rho (k)|^{\frac43}\right)^{\frac34}\leq  \left(\sum_{|k| \leq n} |\rho (k)|\right)^{\frac12} \left(\sum_{|k| \leq n} \rho (k)^{2}\right)^{\frac14}.
 \end{equation}

 \bigskip

 \begin{itemize}
 
 \item[\bf (1)] In \cite{NPY}, the following two facts are proved: {\bf (1a)} if $g\in \mathbb{D}^{1,4}(\mathbb{R}, \gamma)$ and has Hermite rank equal to 1, then there exists an absolute constant $C$ such that $d_{\rm TV}(Y_n,N)\leq Cn^{-1/2} $, and {\bf (1b)}
if $g\in \mathbb{D}^{1,4}(\mathbb{R}, \gamma)$ and $g$ is even, then 
\begin{equation}\label{e:npy}
d_{\rm TV}(Y_n,N)\leq Cn^{-1/2}\sum_{|k| \leq n} |\rho(k)|.
\end{equation}
In view of the usual CLT, the estimate at Point {\bf (1a)} cannot be improved. On the other hand, since an even function $g \in L^2(\gamma)$ has Hermite rank equal to 2, the estimate at \eqref{e:npy} can be meaningfully compared with our Theorem \ref{thm1}. A direct use of \eqref{e:comp} shows that, if $\rho \in \ell^1$ (that is, $\rho$ is absolutely summable), then the right-hand sides of \eqref{ecu1} and \eqref{e:npy} are both bounded by a multiple of $n^{-1/2}$, while \eqref{ecu1} is systematically smaller than \eqref{e:npy} when $\rho \notin \ell^1$. 

\smallskip

 \item[\bf (2)] Given $g = \sum c_q H_q\in L^2(\gamma)$, we define $A(g) := \sum | c_q | H_q$, that is, $A(g)$ is the element of $L^2(\gamma)$ obtained by taking the absolute value of the coefficients appearing in the Hermite expansion of $g$. In \cite{KN}, the following results are proved: {\bf (2a)} the bound 
 $$
 d_{\rm TV}(Y_n,N)\leq Cn^{-1/2}\left(\sum_{|k| \leq n} |\rho(k)|\right)^{1/2}+Cn^{-1/2}\left(\sum_{|k| \leq n}|\rho(k)|^{4/3}\right)^{3/2},
 $$ 
 holds whenever $A(g) \in \mathbb{D}^{1,4}(\R, \gamma)$ and $g$ has Hermite rank 2, and {\bf (2b)} one has the estimate
\begin{equation}\label{e:opt}
 d_{\rm TV}(Y_n,N)\leq Cn^{-1/2}\left(\sum_{|k| \leq n}|\rho(k)|\right)^{1/2}+Cn^{-1/2}\left(\sum_{|k| \leq n}|\rho(k)|^{3/2}\right)^{2},
 \end{equation}
if $A(g) \in \mathbb{D}^{2,6}(\R, \gamma)$ and $g$ has Hermite rank 2. The estimate at Point {\bf (2a)} is the same as the one appearing in our bound \eqref{ecu1}, but is obtained under the strictly stronger assumption that $A(g) \in \mathbb{D}^{1,4}(\R, \gamma)$. On the other hand, one can use the results of \cite{NP} to show that a multiple of the sequence $n\mapsto n^{-1/2}\left(\sum_{|k| \leq n}|\rho(k)|^{3/2}\right)^{2}$ also constitutes a lower bound for $n\mapsto   d_{\rm TV}(Y_n,Z)$ in the case $g = H_2$.

 \item[\bf (3)] In \cite{NZ}, the following is proved: {\bf (3a)} if $g\in \mathbb{D}^{2,4}(\R, \gamma)$ and $g$ has Hermite rank 1, then $ d_{\rm TV}(Y_n,N)\leq Cn^{-1/2}$, 
{\bf (3b)} if $g\in \mathbb{D}^{4,4}(\R, \gamma)$, and $g$ has Hermite rank $2$,  then the bound \eqref{ecu1} holds true,
 and {\bf (3c)} if $g\in \mathbb{D}^{6,8}(\R, \gamma)$ and $g$ has Hermite rank 2, then  
  \begin{equation}\label{e:ottimovero}
 d_{\rm TV}(Y_n,N)\leq Cn^{-1/2}\left(\sum_{|k|\leq n}| \rho(k)|^{3/2}\right)^2
 \end{equation}
As observed at Point {\bf (2)}, the upper bound \eqref{e:ottimovero} cannot be improved. 
 \end{itemize}
 
We would like to emphasize that, unlike  in previous works,   the bound \eqref{ecu1} for functions of Hermite rank $2$ is obtained here assuming only that $g$ is once weakly differentiable. In particular this bound holds for
 $g(x)= |x|^p - \E[|N|^p]$ for any $p\ge 1$.

 \subsection{Plan} 
 The paper is organized as follows. Section 2 contains some preliminaries on the Malliavin calculus associated with a Gaussian family of random variables and on the Malliavin-Stein method for estimating the total variation distance. We also include in this section two basic inequalities that  play an important role in the proofs: a version of  the  Brascamp-Lieb inequality and Gebelein's inequality. Section 3 is devoted to the proof of Theorem
 \ref{thm1}.

     \section{Preliminaries}
  
   In this section, we briefly recall some  elements of  the Malliavin calculus of variations associated with a Gaussian family of random variables. We refer the reader to \cite{np-book,nualartbook,CBMS} for a detailed account of this topic.  
We will also recall a crucial estimate for the total variation distance proved using the Malliavin-Stein approach, and prove two inequalities which will be used in the proof of Theorem \ref{thm1}.
   
\subsection{Malliavin calculus}\label{ss:mall}

 Let $\mathfrak{H}$ be a real separable Hilbert space; in order to simplify our discussion, we will assume for the rest of the paper that $\mathfrak{H} = L^2(A, \mathscr{A}, \mu)$, where $(A, \mathscr{A}, \mu)$ is a $\sigma$-finite measure space such that $\mu$ has no atoms. For any integer $m \geq 1$, we use the symbols $\mathfrak{H}^{\otimes m}$ and $\mathfrak{H}^{\odot m}$ to denote the $m$-th tensor product and the $m$-th symmetric tensor product of $\mathfrak{H}$, respectively. We now let $W = \{W(\phi) : \phi \in \mathfrak{H}\}$ denote an {\bf isonormal Gaussian process} over the Hilbert space $\mathfrak{H}$. This means that $W$ is a centered Gaussian family of random variables defined on $(\Omega, \mathscr{F}, \mathbb{P})$, with covariance $$\EE\left(W(\phi)W(\psi)\right) = \langle \phi, \psi \rangle_{\mathfrak{H}}, \qquad \phi, \psi \in \mathfrak{H}.$$ Without loss of generality, we can assume that $\mathscr{F}$ is generated by $W$.

\smallskip

We denote by $\mathcal{H}_m$ the closed linear subspace of $L^2(\Omega)$ generated by the random variables $\{H_m(W(\varphi)): \varphi \in \mathfrak{H}, \|\varphi\|_{\mathfrak{H}}=1\}$, where $H_m$ is the $m$-th Hermite polynomial defined by 
\[
H_m(x)=(-1)^me^{\frac{x^2}{2}}\frac{d^m}{dx^m}e^{-\frac{x^2}{2}},\quad m \geq 1,
\]
and $H_0(x)=1$. The space $\mathcal{H}_m$ is the {\bf Wiener chaos} of order $m$ associated with $W$. The $m$-th multiple integral of $\phi^{\otimes m} \in \mathfrak{H}^{\odot m}$ is defined by the identity %$I_m(\varphi) = \delta^m (\varphi)$, and in particular,  
$ I_m(\phi^{\otimes m}) = H_m(W(\phi))$ for any $\phi\in \mathfrak{H}$ with $\| \phi\|_{\HH}=1$. The map $I_m$ provides a linear isometry between $\mathfrak{H}^{\odot m}$ (equipped with the norm $\sqrt{m!}\|\cdot\|_{\mathfrak{H}^{\otimes m}}$) and $\mathcal{H}_m$ (equipped with $L^2(\Omega)$ norm). By convention, $\mathcal{H}_0 = \mathbb{R}$ and $I_0(x)=x$.

\smallskip

The space $L^2(\Omega)$ can be decomposed into the infinite orthogonal sum of the spaces $\mathcal{H}_m$. Namely, for any square integrable random variable $F \in L^2(\Omega)$, we have the following expansion,
\begin{equation} \label{chaos}
  F = \sum_{m=0}^{\infty} I_m (f_m) ,
\end{equation}
where $f_0 = \mathbb{E}(F)$, and $f_m \in \mathfrak{H}^{\odot m}$ are uniquely determined by $F$. The representation \eqref{chaos} is known as the {\bf Wiener chaos expansion} of $F$.  

\smallskip

  For a smooth and cylindrical random variable $F= f(W(\varphi_1), \dots , W(\varphi_n))$, with $\varphi_i \in \mathfrak{H}$ and $f \in C_b^{\infty}(\mathbb{R}^n)$ (meaning that $f$ and its partial derivatives are bounded), we define its {\bf Malliavin derivative} as the $\mathfrak{H}$-valued random variable given by
\[
 DF = \sum_{i=1}^n \frac{\partial f}{\partial x_i} (W(\varphi_1), \dots, W(\varphi_n))\varphi_i.
\]
By iteration, we can also define the $k$-th derivative $D^k F$, which is an element in the space $L^2(\Omega; \mathfrak{H}^{\otimes k})$. For any real $p\ge 1$ and any integer $k\ge 1$, the Sobolev space $\mathbb{D}^{k,p}$ is defined as the closure of the space of smooth and cylindrical random variables with respect to the norm $\|\cdot\|_{k,p}$ defined by 
\[
 \|F\|^p_{k,p} = \mathbb{E}(|F|^p) + \sum_{i=1}^k \mathbb{E}(\|D^i F\|^p_{\mathfrak{H}^{\otimes i}}).
\]
Notice that if $F=I_1(\varphi)$ is an element in the first Wiener chaos  with $\|\varphi\|_{\HH} =1$, then (using the notation introduced before Theorem \ref{thm1}) $g\in \mathbb{D}^{k,p}(\R,\gamma)$ if and only if $g(F)\in \mathbb{D}^{k,p}$. 

\smallskip

We define the {\bf divergence operator} $\delta$ as the adjoint of the derivative operator $D$. Namely, an element $u \in L^2(\Omega; \mathfrak{H})$ belongs to the domain of $\delta$, denoted by $\dom\, \delta$, if there is a constant $c_u > 0$ depending on $u$ and satisfying 
\[
|\mathbb{E} (\langle DF, u \rangle_{\mathfrak{H}})| \leq c_u \|F\|_{L^2(\Omega)}
\] for any $F \in \mathbb{D}^{1,2}$.  If $u \in \dom \,\delta$, the random variable $\delta(u)$ is defined by the duality relationship 
\begin{equation} \label{dua}
\mathbb{E}(F\delta(u)) = \mathbb{E} (\langle DF, u \rangle_{\mathfrak{H}}) \, ,
\end{equation}
which is valid for all $F \in \mathbb{D}^{1,2}$.  
In a similar way, for each integer $k\ge 2$, we define the iterated divergence operator $\delta^k$ through the duality relationship 
\begin{equation} \label{dua2}
\mathbb{E}(F\delta^k(u)) = \mathbb{E}  \left(\langle D^kF, u \rangle_{\mathfrak{H}^{\otimes k}} \right) \, ,
\end{equation}
valid for any $F \in \mathbb{D}^{k,2}$, where $u\in  {\rm Dom}\, \delta^k \subset L^2(\Omega; \mathfrak{H}^{\otimes k})$.  

\smallskip

Let $\gamma$  be the standard Gaussian measure on $\R$.  The Hermite polynomials $\{H_m(x), m\ge 0\}$  form a complete orthonormal system in $L^2(\R,\gamma)$ and  any function $g\in L^2(\R,\gamma)$ admits an  orthogonal expansion  of the form (\ref{hexp}). 
 If $g$ has Hermite rank $d$, for any integer $1\le k\le d$, we define the operator $T_k$ by
\begin{equation} \label{t1a}
T_k(g)(x) = \sum_{m=d}^\infty c_m H_{m-k}(x) \,.
\end{equation}
To simplify the notation we will write $T_k(g) =g_k$.

Suppose that $F$ is a random variable in the first Wiener chaos of $W$ of the form $F= I_1(\varphi)$, where $\varphi \in \HH$ has norm one.  Then one can check that  $g_k(F)$ has the representation
   \begin{equation}\label{g.intrep}
   	 g(F) = \delta^k( g_k(F) \varphi^{\otimes k}).
   \end{equation}
   Moreover, if  $g(F) \in \DD^{j,p}(\Omega)$ for some $j\ge 0$ and $p>1$, then $g_k(F) \in \DD^{j+k, p}(\Omega) $; in particular,  for some constant $C$ only depending on $j,k,p$, one has that 
   \begin{equation} \label{ecu2}
   \|g_k(F)\|_{j+k, p}\leq  C \|g(F)\|_{j,p}.
   \end{equation}
    We refer to \cite{NZ} for the proof of these results.

 \smallskip

The family $\{ P_t : \,t\geq 0\}$ of operators is defined for random variables $F\in L^2(\Omega)$ of the form (\ref{chaos}) via the relation $P_tF=\sum_{m=0}^\infty e^{-mt}\,I_m(f_m)$, and is called the {\bf Ornstein-Uhlenbeck semigroup} associated with $W$.
The operator $L$ is defined as $LF=-\sum_{m=0}^\infty mI_m(f_m)$, and can be shown to be the infinitesimal generator of $\{ P_t : \,t\geq 0\}$. The domain of $L$ is $\DD^{2,2}(\Omega)$ and the following {\bf Meyer inequality} holds (see \cite[Theorem 1.5.1]{nualartbook}): for any $r>1$, there exists a constant $c_r$ such that, for any $F\in \DD^{2,r}(\Omega)$,
\begin{equation}\label{meyer}
\|D^2F\|_{L^r(\Omega,\HH^{\otimes 2})}\leq c_r\,\|LF\|_{L^r(\Omega)}.
\end{equation}
We also define the operator $L^{-1}$, which is the inverse of $L$, as follows: for every $F\in L^2(\Omega)$ of the form (\ref{chaos}), we set $L^{-1}F=\sum_{m=1}^\infty -\frac1m I_m(f_m)$.

\begin{remark}\label{r:LH}{\rm Fix an integer $k\geq 1$, and consider a generic element $u$ of the class $L^2(\Omega ; \mathfrak{H}^{\otimes k})$. Then, in view of the fact that  $\HH = L^2(A, \mathscr{A}, \mu)$ by our initial assumption, it is a standard fact that $u$ admits a (parametrized) chaotic expansion of the form
$$
u(t_1,...,t_k) = \sum_{m=0}^\infty I_m (f_m(\cdot, t_1,...,t_k)),
$$
where the ($\mu^{m+k}$--almost everywhere uniquely defined)  kernels $f_m$ are square-integrable and symmetric in the first $m$ variables, and
$$
\E\left[\| u \|^2_{\mathfrak{H}^{\otimes k}}\right]  = \sum_{m=0}^\infty m! \|f_m\|^2_{L^2(\mu^{m+k})} <\infty.
$$
Using such a representation one can canonically define $L^{-1} u$ as the element of $L^2(\Omega ; \mathfrak{H}^{\otimes k} )$ given by
$$
L^{-1}u(t_1,...,t_k) = - \sum_{m=1}^\infty  \frac1m I_m (f_m(\cdot, t_1,...,t_k)).
$$
In what follows, given $k\geq 2$ and $u\in L^2(\Omega ; \mathfrak{H}^{\otimes k})$, the symbol $\widetilde{u}$ stands for the symmetrization of $u$, that is
$$
\widetilde{u}(t_1,...,t_k) =\frac{1}{k!} \sum_{\sigma} u(t_{\sigma(1)},..., t_{\sigma(k)}),
$$
where the sum runs over the group of all permutations $\sigma$ of $\{1,...,k\}$. Note that, for every $r>1$,
\begin{equation}\label{e:sym}
\| \widetilde{u}\|_{L^r(\Omega; \mathfrak{H}^{\otimes k})} \leq \| {u}\|_{L^r(\Omega; \mathfrak{H}^{\otimes k})},
\end{equation}
by the triangle inequality. Also, one has trivially that $\widetilde{L^{-1} u} = L^{-1}\widetilde{u}$.

}
\end{remark}

 We will make repeated use of the following lemma, focussing on the boundedness of $L^{-1}$.
 
 \begin{lemma} \label{lem1}
Let  $p,q,r> 1$ be such that $\frac 1p +  \frac 1q =\frac  1r$.
\begin{enumerate} 
\item Suppose that $F\in L^p(\Omega)\cap \DD^{1,4}(\Omega)$ and $G \in \DD^{1,q\vee 4}(\Omega)$. Then
$GDF$ belongs to the domain of $L^{-1}$ viewed as an $\HH$-valued operator, and
 \begin{equation}\label{ineq1}
 \|   L^{-1} (  G DF ) \|_{L^r(\Omega; \HH)} \le c_{p,q} \| F\|_p  \| G \|_{1,q}.
 \end{equation}
\item Suppose that $F\in L^p(\Omega)\cap \DD^{2,4}(\Omega)$ and $G \in \DD^{2,q\vee 4}(\Omega)$.  Then 
$GD^2F$ belongs to the domain of $L^{-1}$ viewed as an $\HH^{\otimes 2}$-valued operator, and
 \begin{equation}\label{ineq2}
 \|   L^{-1} (  G D^2F ) \|_{L^r(\Omega; \HH^{\otimes 2})} \le c_{p,q} \| F\|_p  \| G \|_{2,q}.
\end{equation}
\end{enumerate}
 \end{lemma}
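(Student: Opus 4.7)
The strategy is to employ the Malliavin product rule in order to transfer derivatives from $F$ (for which we have only low-regularity control through $F\in L^p(\Omega)$) onto $G$ (which enjoys higher Sobolev regularity in $L^q$), and then to combine H\"older's inequality with the $L^r$-boundedness of $L^{-1}$ and of its compositions with $D$ and $D^2$, viewed as operators on centred Hilbert-valued random variables.

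For part (1), the starting point is the identity $G\,DF = D(FG) - F\,DG$, which yields
$$L^{-1}(G\,DF) \;=\; L^{-1}\bigl(D(FG)\bigr) \;-\; L^{-1}(F\,DG).$$
For the second summand, H\"older's inequality gives $\|F\,DG\|_{L^r(\Omega;\HH)} \leq \|F\|_p\,\|DG\|_{L^q(\Omega;\HH)} \leq \|F\|_p\,\|G\|_{1,q}$, and then an $L^r$-boundedness estimate for $L^{-1}$ on centred $\HH$-valued random variables furnishes the required control. For the first summand, I would invoke the Meyer-type bound $\| L^{-1}(D H)\|_{L^r(\Omega;\HH)} \leq C_r\,\|H\|_{L^r(\Omega)}$ (valid for centred $H\in L^r(\Omega)$, derivable from \eqref{meyer} together with the commutation $[L,D]=D$ and the $L^r$-boundedness of $L^{-1}$), paired with $\|FG\|_{L^r} \leq \|F\|_p\|G\|_q$.

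For part (2), I iterate the Leibniz rule: from
$$D^2(FG) \;=\; G\,D^2F \;+\; DF\otimes DG \;+\; DG\otimes DF \;+\; F\,D^2G,$$
and the identity $DF\otimes DG = D(F\,DG) - F\,D^2G$ (together with its transpose, and the symmetry of $D^2F$ and $D^2G$), a symmetrisation argument produces
$$G\,D^2 F \;=\; D^2(FG) \;-\; 2\,\widetilde{D(F\,DG)} \;+\; F\,D^2G,$$
with the tilde denoting the symmetrisation introduced in Remark \ref{r:LH}. Applying $L^{-1}$ to this decomposition, the first term is treated via an iterated Meyer-type bound $\|L^{-1}(D^2 H)\|_{L^r(\Omega;\HH^{\otimes 2})}\leq C\|H\|_{L^r}$ combined with $\|FG\|_r\leq \|F\|_p\|G\|_q$; the second, using the commutation of $L^{-1}$ with symmetrisation and the contraction property \eqref{e:sym}, reduces to an $\HH$-valued version of the first-order bound of part~(1), delivering $C\|F\,DG\|_{L^r(\Omega;\HH)}\leq C\|F\|_p\|G\|_{1,q}$; and the third is controlled directly by H\"older and the $L^r$-boundedness of $L^{-1}$, giving $C\|F\|_p\|G\|_{2,q}$.

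The main technical obstacle is the rigorous justification of the auxiliary operator bounds $\|L^{-1}D^j u\|_{L^r(\Omega;\HH^{\otimes k+j})} \leq C_{r,j}\|u\|_{L^r(\Omega;\HH^{\otimes k})}$ for $j=0,1,2$ on centred Hilbert-valued random variables. The $\HH^{\otimes k}$-valued setting can be reduced to the scalar one by decomposing in an orthonormal basis and using Jensen's inequality to dominate $\|P_t u\|_{\HH^{\otimes k}}$ by $P_t(\|u\|_{\HH^{\otimes k}})$; the scalar bounds then follow from the Mehler representation $-L^{-1}u = \int_0^\infty (P_t u - \pi_0 u)\,dt$, the hypercontractivity of $\{P_t\}_{t\geq 0}$ (which delivers exponential decay of $\|P_t u\|_{L^r}$ on centred $u$ by interpolating the $L^2$-spectral gap with the $L^r$-contractivity of $P_t$), and the Meyer inequality \eqref{meyer}.
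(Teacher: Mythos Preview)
Your proposal is correct and follows the same structural approach as the paper: the Leibniz decompositions $G\,DF = D(FG) - F\,DG$ and $G\,D^2F = D^2(FG) - 2\,\widetilde{D(F\,DG)} + F\,D^2G$ are exactly those used in the paper, and the three auxiliary operator bounds you identify ($L^{-1}$, $L^{-1}D$, $L^{-1}D^2$ bounded from $L^r$ to $L^r(\Omega;\HH^{\otimes j})$) are precisely what the paper establishes in its steps~(i), (ii), (iv).

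The only noteworthy difference lies in how these operator bounds are justified. The paper obtains $L^{-1}$-boundedness directly from the multiplier theorem, and for $L^{-1}D$ it writes $-L^{-1}D = K\,D(-L^{-1})$ with $K$ a bounded Fourier multiplier and then shows $D(-L^{-1}) = \int_0^\infty DP_t\,dt$ is bounded via the pointwise estimate $\|DP_t F\|_{L^r(\Omega;\HH)} \le c_r\, e^{-t}(1-e^{-2t})^{-1/2}\|F\|_r$; for $L^{-1}D^2$ it uses $-L^{-1}D^2 = M\,D^2(-L^{-1})$ together with \eqref{meyer}. Your route via the Mehler representation and hypercontractive decay is viable, but be aware that the specific inequality \eqref{meyer} (which compares $D^2$ to $L$) does not by itself deliver the first-order bound $\|L^{-1}DH\|_{L^r(\Omega;\HH)} \le C\|H\|_r$: for that you need either the full Meyer equivalence $\|DF\|_r \asymp \|(-L)^{1/2}F\|_r$ combined with multiplier boundedness of $(-L)^{-1/2}$, or the $DP_t$ estimate the paper uses. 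Your commutation relation $[L,D]=D$ does yield the intertwining $-L^{-1}D = K\,D(-L^{-1})$ with $K$ the multiplier $\frac{m+1}{m}$, after which either ingredient completes the argument.
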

 
 \begin{proof}
The proof is subdivided into several steps.
\begin{enumerate}
\item[(i)] First of all we observe that, by a direct application of the multiplier theorem (see \cite[Theorem 1.4.2]{nualartbook}), the operator $L^{-1}$ is bounded from $L^r(\Omega)$ to itself. Moreover, one can suitably modify  the proof of such a result to show that, for every $k\geq 1$, $L^{-1}$ is also bounded as an operator from $L^r(\Omega ; \HH^{\otimes k})$ to itself (see Remark \ref{r:LH}).
\item[(ii)] Let $K$ be the operator defined by $KF=\sum_{m\geq 1}\frac{m+1}{m}I_m(f_m)$ for $F=\sum_{m=0}^\infty I_m(f_m)\in L^2(\Omega)$.
Again by a direct application of the multiplier theorem (see \cite[Theorem 1.4.2]{nualartbook}), the operator $K$ is bounded from $L^r(\Omega)$ to itself. 
On the other hand, one has $-DL^{-1}=\int_0^\infty DP_t dt$ (according to \cite[Prop. 2.9.3]{np-book}) as well as the existence of $c_r>0$ such that, for any
$F\in L^{r}(\Omega)$,
\begin{equation}\label{2.15}
\|DP_tF\|_{L^r(\Omega ; \HH)}\leq c_r\frac{e^{-t}}{\sqrt{1-e^{-2t}}}\|F\|_{L^{r}(\Omega)}
\end{equation}
(according\footnote{The statement of \cite[Prop. 5.1.5]{eulalia} contains the factor $t^{-1/2}$ instead of 
$\frac{e^{-t}}{\sqrt{1-e^{-2t}}}$, but an inspection of the proof given therein actually provides the estimate stated in (\ref{2.15}).} to \cite[Prop. 5.1.5]{eulalia}); these two facts plus the Minkowski inequality imply that the operator $D(-L^{-1})$ is bounded from $L^r(\Omega)$ to $L^r(\Omega; \HH)$. 
As a conclusion, using that $-L^{-1}D = KD(-L^{-1})$, we obtain that $-L^{-1}D$ is bounded from $L^r(\Omega)$ to $L^r(\Omega;\HH)$.
\item[(iii)] Since $F\in L^p(\Omega)\cap \DD^{1,4}(\Omega)$ and $G \in \DD^{1,q\vee 4}(\Omega)$, we have that $F,G\in \mathbb{D}^{1,2}$ and $GDF, FDG, D(FG)\in L^2(\Omega ; \mathfrak{H})$. We can therefore write
 \[ 
   L^{-1} (  G DF )  = L^{-1} D(FG) - L^{-1}( FDG).
   \]
   One one hand (see point (ii) above):
   \[
   \|  L^{-1} D(FG)\|_{L^r(\Omega; \HH)}   \le c \| FG\|_r\le \| F\|_p \|G\|_q.
   \]
   On the other hand (see point (i) above):
   \[
  \| L^{-1}( FDG)\| _{L^r(\Omega; \HH)}  \le c \| FDG \|_{L^r(\Omega; \HH)}  \le c \|F\|_p \| DG\|_{L^q(\Omega ; \HH)}.
  \]
  This completes the proof of (\ref{ineq1}).
\item[(iv)] We now suppose that $F\in L^p(\Omega)\cap \DD^{2,4}(\Omega)$ and $G \in \DD^{2,q\vee 4}(\Omega)$.  
We can write
 \[ 
   L^{-1} (  G D^2F )  = L^{-1}( D^2(FG)) - 2\,L^{-1}( \widetilde{D(FDG)})+L^{-1}(FD^2G),
   \]
where the involved symmetrization is defined in Remark \ref{r:LH}. Let $M$ be the operator defined by $MZ=\sum_{m\geq 1}\frac{m+2}{m}I_m(z_m)$ for $Z=\sum_{m=0}^\infty I_m(z_m)\in L^2(\Omega)$.
By a direct application of the multiplier theorem (see \cite[Theorem 1.4.2]{nualartbook}), the operator $M$ is bounded from $L^r(\Omega)$ to itself. 
Thus, using on one hand that $-L^{-1}D^2 = MD^2(-L^{-1})$ and on the other hand that $D^2(-L^{-1})$ is bounded from $L^r(\Omega;\HH^{\otimes 2})$ to itself (by (\ref{meyer})), we obtain that 
$-L^{-1}D^2$ is bounded from $L^r(\Omega)$ to $L^r(\Omega;\HH^{\otimes 2})$.
As a consequence
   \[
   \|  L^{-1} (D^2(FG))\|_{L^r(\Omega; \HH)}   \le c \| FG\|_r\le \| F\|_p \|G\|_q.
   \]
   On the other hand (see points (i) and (ii) above, as well as \eqref{e:sym}):
   \begin{eqnarray*}
  \| L^{-1}( FD^2G)\| _{L^r(\Omega; \HH^{\otimes 2})} & \le& c \| FD^2G \|_{L^r(\Omega; \HH^{\otimes 2})}  \le c \|F\|_p \| D^2G\|_{L^q(\Omega ; \HH^{\otimes 2})}\\
  \|L^{-1}( \widetilde{D(FDG)})\| _{L^r(\Omega; \HH^{\otimes 2})} &\le& c \| FDG \|_{L^r(\Omega; \HH)}  \le c \|F\|_p \| DG\|_{L^q(\Omega ; \HH)}.
 \end{eqnarray*}
  This completes the proof of (\ref{ineq2}).
\end{enumerate}
 \end{proof}

\subsection{Stein's method}
We refer to  \cite{ChenGoldShao} for a complete discussion of this topic.
Let  $h: \mathbb{R} \to \mathbb{R}$ be a Borel function such that $h \in L^1(\R, \gamma)$ and let $N\sim d\gamma(x)$. The ordinary differential equation
      \begin{equation} \label{stein}
      f'(x) - xf(x) = h(x) - \mathbb{E}(h(N))
      \end{equation}
is called the Stein's equation associated with $h$. The function 
\[
f_h(x):= e^{x^2/2}\int_{-\infty}^x (h(y) - \mathbb{E}(h(N)))e^{-y^2/2} dy
\]
 is the unique solution to the Stein's equation satisfying $\lim_{|x| \to \infty} e^{-x^2/2} f_h(x) = 0$. Moreover, if $h$ is bounded by $1$, then $f_h$ satisfies  $\|f_h \|_\infty \leq \sqrt{\pi /2}$ and 
    $   \|f_h' \|_\infty \leq 2$. 
We refer  to  \cite{np-book} and the references therein for a complete proof of these results. 

We recall  the total variation distance between the laws of two random variables  defined 
in (\ref{e:dtv}). 
  Substituting $x$ by $F$  in  Stein's equation (\ref{stein}) and
 using the   estimate for  $ \|f_h' \|_\infty$    lead  to the fundamental estimate
\begin{equation}  \label{equ83}
d_{\rm TV}(F,N) \le  \sup_{f\in  \mathcal{C}^1(\R),    \| f' \|_\infty \le 2 } | \EE  (f'(F)- Ff(F)) | \,.
\end{equation}

In the framework of an isonormal Gaussian process $W$, we  can use Stein's equation to estimate  the  total variation distance between a random variable $F = \delta(u)$ and $N$.  A basic result is given in the next proposition (see \cite{eulalia,np-book}), which is an easy consequence of  (\ref{equ83}) and the duality relationship (\ref{dua}).
 
\begin{proposition}\label{prop2}
	Assume that $u\in {\rm Dom} \,\delta$,  $F=\delta(u) \in \mathbb{D}^{1,2}$ and $\EE(F^2)=1$.  Then,
	\begin{eqnarray*}
	d_{\rm TV} (F,N) 	\le   2 \sqrt{\Var( \langle DF, u \rangle_{\mathfrak{H}} )}  \,.
	\end{eqnarray*}
\end{proposition}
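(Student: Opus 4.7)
The plan is to combine the fundamental Stein bound \eqref{equ83} with the duality relation \eqref{dua} and identify the resulting expression as the standard deviation of $\langle DF, u\rangle_{\mathfrak{H}}$.

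First I would start from \eqref{equ83}, which reduces the problem to estimating $|\mathbb{E}(f'(F) - F f(F))|$ uniformly over the class of $\mathcal{C}^1$ functions $f$ with $\|f'\|_\infty \le 2$. Since $F = \delta(u)$ and $f(F) \in \mathbb{D}^{1,2}$ (because $F \in \mathbb{D}^{1,2}$ and $f$ has bounded derivative, so the chain rule gives $D f(F) = f'(F) DF \in L^2(\Omega;\mathfrak{H})$), the duality relationship \eqref{dua} yields
\begin{equation*}
\mathbb{E}(F f(F)) = \mathbb{E}(f(F)\delta(u)) = \mathbb{E}(\langle Df(F), u\rangle_{\mathfrak{H}}) = \mathbb{E}(f'(F) \langle DF, u\rangle_{\mathfrak{H}}).
\end{equation*}
Consequently,
\begin{equation*}
\mathbb{E}(f'(F) - F f(F)) = \mathbb{E}\bigl(f'(F)(1 - \langle DF, u\rangle_{\mathfrak{H}})\bigr).
\end{equation*}

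Next I would use the normalization hypothesis $\mathbb{E}(F^2) = 1$ to identify the mean of $\langle DF, u\rangle_{\mathfrak{H}}$. Applying \eqref{dua} again with the choice of test variable equal to $F$ itself,
\begin{equation*}
\mathbb{E}(\langle DF, u\rangle_{\mathfrak{H}}) = \mathbb{E}(F \delta(u)) = \mathbb{E}(F^2) = 1,
\end{equation*}
so that $1 - \langle DF, u\rangle_{\mathfrak{H}}$ is a centered random variable. Taking absolute values, bounding $|f'(F)| \le 2$, and then applying Cauchy--Schwarz (or equivalently Jensen) to pass from the $L^1$ norm to the $L^2$ norm of $\langle DF, u\rangle_{\mathfrak{H}} - \mathbb{E}\langle DF, u\rangle_{\mathfrak{H}}$, we obtain
\begin{equation*}
|\mathbb{E}(f'(F) - F f(F))| \le 2\, \mathbb{E}\bigl|1 - \langle DF, u\rangle_{\mathfrak{H}}\bigr| \le 2 \sqrt{\mathrm{Var}(\langle DF, u\rangle_{\mathfrak{H}})}.
\end{equation*}
Taking the supremum over admissible $f$ and invoking \eqref{equ83} delivers the announced bound.

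There is no substantive obstacle here: every ingredient — the Stein bound \eqref{equ83}, the duality \eqref{dua}, the chain rule for the Malliavin derivative, and the moment normalization — has already been recorded in the excerpt. The only point that requires a small verification is the admissibility of $f(F)$ as a test variable in the duality formula, which follows from the chain rule together with the bounds $\|f\|_\infty \le \sqrt{\pi/2}$ and $\|f'\|_\infty \le 2$ recalled just before \eqref{equ83}.
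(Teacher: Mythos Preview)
Your argument is correct and matches exactly the approach the paper indicates: the paper does not give a detailed proof but states that the proposition ``is an easy consequence of \eqref{equ83} and the duality relationship \eqref{dua},'' and your computation spells out precisely this consequence. Nothing to add.
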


\subsection{Brascamp-Lieb inequality}

In this subsection we recall  some inequalities proved in \cite{NZ} (see Lemmas 6.6 and 6.7 therein), which can be deduced from  the Brascamp-Lieb inequality (see   \cite{bl}) or just using
  H\"older's and Young's convolution inequalities. 

 \begin{lemma}  \label{lem2.1}
 Fix an integer $M\ge 2$. Let  $f$ be a non-negative function on the integers and set ${\bf k} = (k_1, \dots, k_M)$. Then,  for any vector ${\bf v} \in \R^M$  whose  components are $1$ or $-1$, we have
\begin{equation}  \label{equ6a}
  \sum_{  {\bf k} \in \mathbb{Z}^M}  f({\bf k}  \cdot  {\bf v} )   \prod _{j=1} ^M f(k_j)    \le C \left(\sum_{k\in \mathbb{Z}} f(k)^{1+ \frac 1M}\right)^M.
\end{equation}
\end{lemma}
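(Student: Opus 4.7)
The plan is to deduce \eqref{equ6a} from an elementary combination of Hölder's inequality and Young's convolution inequality on $\ell^p(\Z)$, as suggested in the statement of the lemma, without invoking the general Brascamp--Lieb machinery.

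First I would absorb the sign pattern ${\bf v} = (\epsilon_1,\dots,\epsilon_M) \in \{-1,+1\}^M$ through the change of summation variables $m_j := \epsilon_j k_j$, which is a bijection of $\Z$ onto itself for each $j$. Under this substitution the left-hand side of \eqref{equ6a} rewrites as
$$
\sum_{{\bf m} \in \Z^M} f(m_1+\cdots+m_M) \prod_{j=1}^M f(\epsilon_j m_j).
$$
Since $m \mapsto \epsilon_j m$ preserves the $\ell^p$ norm of $f$ for every $p\ge 1$, the right-hand side of \eqref{equ6a} is insensitive to the signs $\epsilon_j$, and it suffices to treat the case ${\bf v} = (1,\dots,1)$.

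In that case, writing $s := k_1+\cdots+k_M$, the sum reorganizes as
$$
\sum_{s\in\Z} f(s)\, f^{*M}(s),
$$
where $f^{*M}$ is the $M$-fold convolution of $f$ with itself on $\Z$. I would then apply Hölder's inequality with conjugate exponents $p = 1+\tfrac{1}{M}$ and $q=M+1$, followed by Young's convolution inequality with $M$ copies of $f$ at the common exponent $1+\tfrac{1}{M}$. The compatibility condition needed for Young's inequality reads $\tfrac{M}{1+1/M} = (M-1) + \tfrac{1}{M+1}$, which is immediately verified and yields $\|f^{*M}\|_{M+1} \le \|f\|_{1+1/M}^M$. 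Combining this with Hölder gives
$$
\sum_{s\in\Z} f(s)\, f^{*M}(s) \;\le\; \|f\|_{1+1/M}^{\,M+1} \;=\; \Bigl(\sum_{k\in \Z} f(k)^{1+1/M}\Bigr)^{M},
$$
which is exactly \eqref{equ6a} (in fact with $C=1$).

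The only real subtlety is arranging Hölder and Young so that they close up at exactly the exponent $1+\tfrac{1}{M}$; this is precisely what forces the choice $p=1+\tfrac{1}{M}$, $q = M+1$, and makes the argument essentially unique. The sign reduction and the two classical inequalities are otherwise entirely routine.
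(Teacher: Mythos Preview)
Your argument is correct. The paper does not actually supply a proof of this lemma: it merely cites \cite{NZ} and remarks that the bound can be obtained either from the Brascamp--Lieb inequality or ``just using H\"older's and Young's convolution inequalities.'' Your proposal carries out exactly the second of these two routes, with the sharp constant $C=1$, so there is nothing to compare against.

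One minor point of presentation: after your change of variables you have
\[
\sum_{{\bf m}\in\Z^M} f(m_1+\cdots+m_M)\prod_{j=1}^M f(\epsilon_j m_j),
\]
and you then say ``it suffices to treat the case ${\bf v}=(1,\dots,1)$.'' Strictly speaking the reduction is not that the sum \emph{equals} the all-plus case, but that the H\"older--Young bound you derive depends on the factors only through the norms $\|f(\epsilon_j\,\cdot)\|_{1+1/M}=\|f\|_{1+1/M}$, so the signs are irrelevant for the \emph{estimate}. This is clearly what you intend, and the computation with conjugate exponents $p=1+\tfrac1M$, $q=M+1$ and the Young compatibility $\tfrac{M}{1+1/M}=(M-1)+\tfrac1{M+1}$ is spot on.
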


 \begin{lemma} \label{lem2.2}
 Fix an integer $M\ge 3$ and assume  $\sum_{k \in \mathbb{Z}}
\rho(k)^2<\infty$.  We have
\begin{equation}  \label{equ22}
  \sum_{|k_j| \le n  \atop 1\le j \le M}   \rho(k_1)^2 |\rho( {\bf k}  \cdot  {\bf v} )|  \prod _{j=2} ^{M} | \rho(k_j) |   \le C \left(\sum_{|k| \leq n} |\rho(k)|\right)^{M-2},
\end{equation}
where ${\bf k} = (k_1, \dots, k_M)$ and ${\bf v} \in \R^M$ is a fixed vector whose  components are  $0
$, $1$ or $-1$ and it has at least two nonzero components.   
\end{lemma}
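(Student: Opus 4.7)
The plan is to prove Lemma \ref{lem2.2} by a case analysis on the support $J := \{j : v_j \ne 0\} \subseteq \{1,\ldots,M\}$ of $\mathbf{v}$, reducing each case to a convolution estimate on $\mathbb{Z}$ via Young's inequality. Since $\rho$ is even, the change of variable $k_j \mapsto -k_j$ shows that without loss of generality I may assume $v_j \in \{0,1\}$ for every $j$, so that $\mathbf{k}\cdot\mathbf{v} = \sum_{j\in J} k_j$. Throughout I set
\[
g(k) := |\rho(k)|\,\mathbf{1}_{|k|\le n}, \qquad h(k) := \rho(k)^2\,\mathbf{1}_{|k|\le n},
\]
so that $\|g\|_1 = \sum_{|k|\le n}|\rho(k)|$, and $\|g\|_2^2 = \|h\|_1 \le \sum_{k\in\mathbb{Z}}\rho(k)^2 \le C$ by hypothesis. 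I shall also use the iterated Young bound $\|g^{*m}\|_2 \le \|g\|_1^{m-1}\|g\|_2$ (obtained by applying $\|f_1*f_2\|_2\le\|f_1\|_1\|f_2\|_2$ inductively).

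\textbf{Case (i): $1 \notin J$.} Here $\mathbf{k}\cdot\mathbf{v}$ does not depend on $k_1$ nor on any $k_j$ with $j \in \{2,\ldots,M\}\setminus J$. Summing first over $k_1$ contributes $\|h\|_1 \le C$, and each of the $M-1-|J|$ free indices contributes $\|g\|_1$. What remains is
\[
T \;=\; \sum_{(k_j)_{j\in J}} g\Big(\sum_{j\in J}k_j\Big)\prod_{j\in J} g(k_j) \;=\; \sum_{\ell \in \mathbb{Z}} g(\ell)\,g^{*|J|}(\ell),
\]
after conditioning on $\ell := \sum_{j\in J}k_j$. Cauchy--Schwarz gives $T \le \|g\|_2\,\|g^{*|J|}\|_2 \le \|g\|_2^2\,\|g\|_1^{|J|-1} \le C\,\|g\|_1^{|J|-1}$. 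Combining, $S_n \le C\,\|g\|_1^{M-1-|J|}\cdot\|g\|_1^{|J|-1} = C\,\|g\|_1^{M-2}$, as required.

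\textbf{Case (ii): $1 \in J$.} Write $J = \{1\}\cup J'$ with $|J'|\ge 1$. After factoring out the $M-1-|J'|$ free indices, I must estimate
\[
T' \;=\; \sum_{k_1,\,(k_j)_{j\in J'}} h(k_1)\,g\Big(k_1 + \sum_{j\in J'} k_j\Big)\prod_{j\in J'} g(k_j).
\]
Summing over $k_1$ first and using the symmetry of $g$ identifies the inner sum with $(h*g)(\ell)$ evaluated at $\ell := \sum_{j\in J'}k_j$, so $T' = \sum_{\ell} (h*g)(\ell)\,g^{*|J'|}(\ell) \le \|h*g\|_2\,\|g^{*|J'|}\|_2$. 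Young's inequality yields $\|h*g\|_2 \le \|h\|_1\|g\|_2 \le C$ and $\|g^{*|J'|}\|_2 \le \|g\|_1^{|J'|-1}\|g\|_2 \le C\,\|g\|_1^{|J'|-1}$, so that $T' \le C\,\|g\|_1^{|J'|-1}$. Combined with the free-index contribution this again produces $S_n \le C\,\|g\|_1^{M-2}$.

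The main obstacle is purely combinatorial bookkeeping: one must verify that in both cases the exponent of $\|g\|_1$ coming from the ``dummy'' indices plus the one coming from the convolution estimate adds up to exactly $M-2$. The crucial ingredient making this possible is that the squared factor $\rho(k_1)^2$ supplies the finite quantity $\|g\|_2^2 = \|h\|_1$ (rather than a power of $\|g\|_1$) through the hypothesis $\sum \rho(k)^2 <\infty$, and that the constraint $|J|\ge 2$ (hence $|J'|\ge 1$ in Case (ii)) ensures that at least one of the non-trivial ``convolved'' variables can be traded against this $\|g\|_2^2$ savings to reduce the naive exponent $M-1$ to the sharper $M-2$.
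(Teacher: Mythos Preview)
The paper does not supply its own proof of this lemma; it quotes it from \cite{NZ} (Lemma~6.7 there) and remarks only that it ``can be deduced from the Brascamp--Lieb inequality \ldots\ or just using H\"older's and Young's convolution inequalities.'' Your argument is exactly the second route the paper points to, and the case split together with the iterated Young bound $\|g^{*m}\|_2\le\|g\|_1^{m-1}\|g\|_2$ is the right mechanism; the bookkeeping of exponents is correct in both cases.

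There is one small slip worth fixing. In both cases you write the dot-product factor as $g\big(\sum_{j\in J} k_j\big)$ (and correspondingly $h*g$ in Case~(ii)), but your $g$ carries the truncation $\mathbf{1}_{|\cdot|\le n}$, whereas in the statement the factor $|\rho(\mathbf{k}\cdot\mathbf{v})|$ is \emph{not} truncated, and $|\mathbf{k}\cdot\mathbf{v}|$ can be as large as $Mn$. As written, your $T$ and $T'$ are therefore lower bounds, not upper bounds, for the true inner sums. The repair is immediate: use the untruncated $|\rho|$ in place of $g$ for that single factor. Since your estimates only ever invoke the $\ell^2$ norm of that factor (via Cauchy--Schwarz and Young), and $\|\rho\|_{\ell^2}^2=\sum_{k\in\mathbb{Z}}\rho(k)^2<\infty$ by hypothesis, every inequality goes through unchanged and the final exponent $M-2$ is preserved.
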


 \subsection{Gebelein's inequality}\label{ss:geb}

In the proof of Theorem \ref{thm1}, we will need the following Gaussian inequality.
 
 \begin{lemma} \label{Geb}
 Let  $W = \{ W(h), h\in \HH\} $ be an isonormal Gaussian
process over some real separable Hilbert space  $\HH$, and let  $\HH_1$ , $\HH_2$ be
two Hilbert subspaces of $\HH$. Define $W_1$ and $W_2$, respectively, to be the
restriction of $W$ to $\HH_1$ and $\HH_2$. Now consider two measurable mappings
$F_i:  \RR^{\HH_i} \rightarrow \RR$, $i=1,2$,  and assume that each  $F_i(W_i)$ is centered and
 $F_1(W_1) \in L^p(\Omega)$, $F_2(W_2) \in L^q(\Omega)$,  with $\frac 1p + \frac 1q=1$.
Then,
 \[
 |\E[ F_1(W_1) F_2(W_2)]| \le  \theta  \| F_1(W_1)\|_p \| F_2(W_2)\|_q,
 \]
 where
\[
 \theta := \sup_{ g\in  \HH_1, h\in  \HH_2,  \|g\|=\|h\|=1}  | \langle h,g \rangle _{\HH}|.
\] 
 \end{lemma}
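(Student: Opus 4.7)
I would begin by proving the kernel estimate at the heart of Gebelein's inequality: for each $n\ge 1$ and every pair of symmetric tensors $f_i \in \mathfrak{H}_i^{\odot n}$ ($i=1,2$),
\begin{equation*}
\big|\langle f_1, f_2\rangle_{\mathfrak{H}^{\otimes n}}\big| \le \theta^n \|f_1\|_{\mathfrak{H}^{\otimes n}} \|f_2\|_{\mathfrak{H}^{\otimes n}}.
\end{equation*}
Letting $\Pi_1:\mathfrak{H}\to\mathfrak{H}_1$ denote the orthogonal projection, the definition of $\theta$ is equivalent to $\|\Pi_1|_{\mathfrak{H}_2}\|_{\mathrm{op}}=\theta$, so the tensor power $\Pi_1^{\otimes n}:\mathfrak{H}_2^{\otimes n}\to\mathfrak{H}_1^{\otimes n}$ has operator norm exactly $\theta^n$; the desired estimate then follows from the identity $\langle f_1, f_2\rangle_{\mathfrak{H}^{\otimes n}} = \langle f_1, \Pi_1^{\otimes n}f_2\rangle_{\mathfrak{H}_1^{\otimes n}}$ combined with Cauchy--Schwarz.

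Next I would handle the Hilbertian case $p=q=2$. Since each $F_i(W_i)$ is centered and $\sigma(W_i)$-measurable, its Wiener chaos expansion reads $F_i(W_i)=\sum_{n\ge 1}I_n(f_i^{(n)})$ with kernels $f_i^{(n)}\in\mathfrak{H}_i^{\odot n}$. Using the orthogonality of Wiener chaoses, the isometry $\E[I_n(f)I_n(g)] = n!\langle f,g\rangle_{\mathfrak{H}^{\otimes n}}$, and the kernel estimate above, one obtains
\begin{equation*}
|\E[F_1F_2]| \le \sum_{n\ge 1}n!\theta^n\big|\langle f_1^{(n)}, f_2^{(n)}\rangle\big| \le \theta\sum_{n\ge 1}\|I_n(f_1^{(n)})\|_2\|I_n(f_2^{(n)})\|_2 \le \theta\|F_1\|_2\|F_2\|_2,
\end{equation*}
the final inequality being Cauchy--Schwarz applied to the chaos decomposition.

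For the general H\"older-conjugate pair $(p,q)$, I would start from the identity $\E[F_1F_2]=\E[F_1\cdot\E[F_2|W_1]]$ and exploit the fact --- provable via Mehler's formula --- that $\E[F_2|W_1]$ acts at the chaos level by sending $f_2^{(n)}\mapsto \Pi_1^{\otimes n}f_2^{(n)}$, a $\theta^n$ Hilbertian contraction. The main obstacle is that the conditional-expectation operator is \emph{not} a $\theta$-contraction on centered $L^q$ when $q\neq 2$ (simple indicator-function examples show this), so a naive H\"older step fails. One must instead run a delicate approximation argument: approximate each $F_i$ by a $P_\varepsilon$-regularized version (or a chaos truncation), which is bounded and lies in every $L^r$ by hypercontractivity, apply the $L^2$ estimate to the regularized pair, and carefully pass to the limit using Nelson's inequality together with the algebraic identity $(p-1)(q-1)=1$ characterising H\"older-conjugates, which is exactly what permits the exact factor $\theta$ to be preserved.
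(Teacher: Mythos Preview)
Your $p=q=2$ argument via the chaos expansion and the tensor–power bound $\|\Pi_1^{\otimes n}\|\le\theta^n$ is correct and self-contained; this is a genuinely different route from the paper, which does not work with kernels at all but instead reduces (via \cite[Theorem 3.4]{V}) to the rank-one Mehler situation of Proposition~\ref{p:k} and then invokes the $L^q$ decay estimate $\|P_uF\|_q\le C_qe^{-u}\|F\|_q$ for centered $F$, obtained from the multiplier theorem. Your approach has the advantage of giving the sharp constant $\theta$ in the Hilbertian case and of bypassing the subspace-to-Mehler reduction entirely.

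For general $(p,q)$, however, your last paragraph is not a proof but a hope. You correctly observe that the conditional expectation $F_2\mapsto\E[F_2\mid W_1]$ is \emph{not} a $\theta$-contraction on centered $L^q$ for $q\neq 2$, and then assert that a $P_\varepsilon$-regularisation together with Nelson's inequality rescues the sharp factor $\theta$. It does not: applying the $L^2$ Gebelein bound to $(P_\varepsilon F_1,P_\varepsilon F_2)$ produces $\theta\,\|P_\varepsilon F_1\|_2\|P_\varepsilon F_2\|_2$, and when $p<2$ the only way to control $\|P_\varepsilon F_1\|_2$ by $\|F_1\|_p$ via hypercontractivity is to take $\varepsilon\ge\tfrac12\log\tfrac{1}{p-1}$, which is bounded away from zero and therefore incompatible with passing to the limit $\varepsilon\to 0$. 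The identity $(p-1)(q-1)=1$ you cite is exactly the Nelson threshold, but it enters as a \emph{constraint} on $\varepsilon$, not as a mechanism that makes the constraint disappear. No rearrangement of this scheme that I can see recovers the constant $\theta$ exactly.

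Two remarks may help you close the argument. First, note that the paper's own method (Proposition~\ref{p:k}) only delivers $C_p\,\theta$, not $\theta$; the absence of a constant in the statement of Lemma~\ref{Geb} appears to be an oversight, and the applications in Section~3 absorb any such constant immediately. So you are trying to prove more than is needed or claimed elsewhere in the paper. Second, if you are content with $C_p\,\theta$, you can finish cleanly from your chaos set-up: the operator $T:F_2\mapsto\E[F_2\mid W_1]$ acts on the $n$-th chaos as $\Pi_1^{\otimes n}$, so writing $T=T\circ(-L)\circ(-L)^{-1}$ on centered functions and using that $(-L)^{-1}$ contracts each chaos by $1/n\le 1$ while $T\circ(-L)$ has chaos multipliers $n\theta^n$ (bounded by $C\theta$ uniformly in $n$), the multiplier theorem \cite[Theorem~1.4.2]{nualartbook} gives $\|TF_2\|_q\le C_q\theta\|F_2\|_q$ directly --- this is essentially the paper's argument transplanted into your framework.
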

 
 Lemma \ref{Geb} follows from the forthcoming Proposition \ref{p:k}, and can be shown by adopting almost verbatim the strategy of proof of \cite[Theorem 3.4]{V} -- details are left to the reader.

 \begin{proposition}\label{p:k}
 Let  $W = \{ W(h) : h\in \HH\} $,  $\widehat{W} = \{ \widehat{W}(h) : h\in \HH\} $  two independent  isonormal Gaussian
processes over some real separable Hilbert space  $\HH$.
 Consider two measurable mappings
$F_i:  \RR^{\HH} \rightarrow \RR$, $i=1,2$,  and assume that each  $F_i(W)$ is centered and
 $F_1(W) \in L^p(\Omega)$, $F_2(W) \in L^q(\Omega)$,  with $\frac 1p + \frac 1q=1$.
Then, for any $\theta \in [-1,1]$,
 \[
 |\E[ F_1(W) F_2(\theta W + \sqrt{1-\theta^2}     \widehat{W})]| \le C  |\theta | \| F_1(W)\|_p \| F_2(W)\|_q,
 \]
 for some constant $C$ depending uniquely on $p$.
 \end{proposition}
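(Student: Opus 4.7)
The plan is to convert the bilinear expectation into an estimate involving the Ornstein-Uhlenbeck semigroup $\{P_t\}_{t\ge 0}$ and then exploit its exponential contraction on centered $L^q$ random variables.

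First, I would reduce to $\theta\in [0,1]$: the symmetry $W\stackrel{d}{=}-W$, $\widehat W\stackrel{d}{=}-\widehat W$ (applied to $\widetilde{F}_1(\cdot):=F_1(-\cdot)$) exchanges $\theta$ with $-\theta$ without changing the $L^p$-norm of $F_1(W)$. The case $\theta=0$ is trivial, since $F_1(W)$ and $F_2(\widehat W)$ are then independent while $\E[F_1(W)]=0$. For $\theta\in(0,1]$, set $t:=-\log\theta\ge 0$. Mehler's formula gives
\[
\E\bigl[F_2\bigl(\theta W + \sqrt{1-\theta^2}\,\widehat W\bigr)\,\big|\,W\bigr]=(P_tF_2)(W),
\]
so taking full expectations against $F_1(W)$ and applying H\"older's inequality yields
\[
\bigl|\E\bigl[F_1(W)\,F_2\bigl(\theta W+\sqrt{1-\theta^2}\,\widehat W\bigr)\bigr]\bigr|\le\|F_1(W)\|_p\,\|P_tF_2(W)\|_q.
\]
The proposition will therefore follow once I prove the contraction estimate
\begin{equation}\label{eqn:sp-gap}
\|P_tF_2\|_q\le C_q\,e^{-t}\,\|F_2\|_q,\qquad t\ge 0,
\end{equation}
for every centered $F_2\in L^q(\Omega)$, with $C_q$ depending only on $q$ (equivalently on $p$).

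To establish \eqref{eqn:sp-gap} I would argue as follows. The $L^2$ case is the elementary spectral gap $\|P_tF_2\|_2\le e^{-t}\|F_2\|_2$, immediate from the chaos expansion $F_2=\sum_{m\ge 1}I_m(f_m)$. For $q\ge 2$, set $t_0:=\tfrac12\log(q-1)$, so that Nelson's hypercontractivity provides $\|P_{t_0}G\|_q\le \|G\|_2$. For $t\ge t_0$, splitting $P_t=P_{t_0}\circ P_{t-t_0}$ and combining hypercontractivity, the $L^2$ gap applied to the centered function $P_{t-t_0}F_2$, and Jensen's inequality $\|F_2\|_2\le\|F_2\|_q$ yields \eqref{eqn:sp-gap} with constant $e^{t_0}$; for $0\le t<t_0$, the same constant works by the $L^q$-contractivity of $P_t$. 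For $1<q<2$, duality closes the argument: since $\E F_2=0$, one has $\langle P_tF_2,G\rangle=\langle F_2,P_t(G-\E G)\rangle$ for every $G\in L^{q^*}$, and the centered function $G-\E G$ is handled by the bound just established at the conjugate exponent $q^*>2$.

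The main obstacle is the contraction \eqref{eqn:sp-gap}: spectral gap is genuinely $L^2$ information, and transferring it to $L^q$ with the correct exponential rate requires the nontrivial interpolation above (or, alternatively, a careful application of Meyer's multiplier theorem in the form of \cite[Theorem 1.4.2]{nualartbook}). Everything else is routine manipulation of Mehler's formula, H\"older's inequality, and Gaussian symmetry.
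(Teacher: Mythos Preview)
Your proposal is correct and follows essentially the same route as the paper: Mehler's formula converts the mixed expectation into $\E[F_1(W)\,P_{\log(1/\theta)}F_2(W)]$, then H\"older plus the semigroup contraction $\|P_uF_2\|_q\le C_q e^{-u}\|F_2\|_q$ for centered $F_2$ finish the job. The only difference is cosmetic: the paper simply invokes \cite[Lemma~1.4.1]{nualartbook} for this last bound, whereas you supply a self-contained derivation via the $L^2$ spectral gap, hypercontractivity, and duality (which is exactly what underlies that lemma).
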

 
 \begin{proof}[Proof of Proposition \ref{p:k}]
 Without loss of generality, we can assume that $\theta \in (0,1)$. Using Mehler's formula (see e.g. \cite[formula (1.67), p. 55]{nualartbook}) together with the properties of conditional expectations, we infer that
 \begin{align*}
 \E[ F_1(W) F_2(\theta W + \sqrt{1-\theta^2}     \widehat{W})] = \E[ F_1(W) P_{\log\frac1\theta} F_2(W)],
 \end{align*}
 where $\{ P_t : t\geq 0\}$ is the Ornstein-Uhlenbeck semigroup introduced above. The conclusion now follows from a standard application of the Cauchy-Schwarz inequality, as well as from the following estimate: for every $q>1$ and every $u>0$,
 $$
 \| P_u F_2(W)\|_q\leq C e^{-u} \| F_2(W)\|_q,
 $$
 for some constant $C$ uniquely depending on $q$, which follows from a direct application of \cite[Lemma 1.4.1]{nualartbook}, as well as from the fact that $F_2$ is centered by assumption.
% 
% 
% Notice that
% \[
% D^W(F_2(\theta W + \sqrt{1-\theta^2} \widehat{W})  ) = \theta (DF_2)(\theta W + \sqrt{1-\theta^2} \widehat{W})   ,
% \]
% where here we denote by $D^W$ the derivative operator with respect to the isonormal Gaussian process $W$.
% As a consequence,
%  \begin{align*}
%& |\E^{\widehat{W}} \E^W[ F_1(W) F_2(\theta W + \sqrt{1-\theta^2}     \widehat{W})] |\\
%& \quad \le |\theta|
% | \E^{\widehat{W}} \E^W[ \langle DL^{-1}F_1(W) , (DF_2)(\theta W + \sqrt{1-\theta^2} \widehat{W})   \rangle_{\HH}    ]|.
% \end{align*}
% Without loss of generality, we can assume that  $\theta\in (0,1)$. Then, by Mehler's formula
% \[
%\E^{\widehat{W}}  [ (DF_2)(\theta W + \sqrt{1-\theta^2} \widehat{W}) ] =(P_{\log \theta} DF_2)(W),
%\]
%where $P_t$ is the Ornstein-Uhlenbeck semigroup. Thus,
% \begin{align*}
%& |\E^{\widehat{W}} \E^W[ F_1(W) F_2(\theta W + \sqrt{1-\theta^2}     \widehat{W})] |\\
%& \quad \le |\theta|
% |   \E^W[ \langle DL^{-1}F_1(W) , (P_{\log \theta} DF_2)(W)   \rangle_{\HH}    ]|\\
% & \quad = |\theta|
% |   \E^W[  L^{1/2}\langle DL^{-1}F_1(W) , L^{-1/2} (P_{\log \theta} DF_2)(W)   \rangle_{\HH}    ]|,
% \end{align*}
% and we conclude using Meyer's inequalities. 
 \end{proof}

 \section{Proof of Theorem \ref{thm1}} 

We are now ready for the proof of Theorem \ref{thm1}. In what follows, we use the letter $C>0$ to indicate a constant that may depend on the $\DD^{1,4}(\R,\gamma)$ norm of $g$, but which is always independent of $n$. Its exact value is immaterial and may vary from one line to another.
The main difficulty of the proof is to show the forthcoming inequality (\ref{toshow}). 
%To do so, along the forthcoming proof, we will sometimes use more regularity and boundedness on $g$ than the ones inherited from $\DD^{1,4}(\R,\gamma)$.
%But it will never be used to compute bounds, only to pass from one identity to another so that every such reasoning can be justified by a classical approximation argument.
 
\bigskip

\underline{\it Step 1: Preparing the proof}. We shall use the Malliavin-Stein approach. In order to be in a position to do so, 
consider a centered stationary Gaussian  family of random variables   $X=\{ X_n , n\ge 0\}$  with unit variance and covariance
    $\rho(k) = \EE(X_0 X_k)$ for $k \ge 0$.  We put $\rho(-k) = \rho(k)$ for $k<0$.
    Suppose that $\HH$ is a Hilbert space and let $\{e_i,\,i \ge 0\}$  be a family of $\HH$ such that
    $\langle e_i, e_j \rangle_{\HH}= \rho(i-j)$ for each $i,j \ge 0$. In this situation, if  $\{ W(\phi): \phi \in \HH\}$ is an isonormal Gaussian process, then
      the sequence  $X=\{ X_n , n\ge 0\}$  has the same law as $\{ W(e_n), n \ge 0\}$ and we can assume, without any loss of generality, that
      $X_n = W(e_n)$. 

      Consider the sequence   $	  F_n := \frac{1}{\sqrt{n}} \sum_{j=1}^n g(X_j) $ introduced in (\ref{fn}), where $g\in L^2(\R, \gamma)$ has  Hermite rank $d\ge 2$ and let 
	$\sigma_n^2= \EE (F_n^2)$.    Under condition  (\ref{h1}), it is well known that $\sigma_n^2 \to \sigma^2$ as $n\to \infty$,  where $\sigma^2$ has been defined in (\ref{bm.sig}). 
	Set $Y_n =\frac {F_n} {\sigma_n}$.  Notice that $\sigma>0$ implies that  $\sigma_n$ is bounded below for $n$ large enough. 
	Taking into account  (\ref{g.intrep}), we have  the representation $Y_n = \delta (\frac 1{ \sigma_n}u_n)$, where 
\begin{equation} \label{un}
u_n  =   \frac{1}{\sqrt{n}} \sum_{j=1}^n g_1(X_j) e_j,
\end{equation}
and $g_1$ is the shifted function introduced in  (\ref{t1a}). As a consequence of Proposition \ref{prop2}, we have the estimate
\begin{align}
	d_{TV} (Y_n ,N) & \leq  2\sqrt{ {\rm Var} ( \langle DY_n,  \frac 1{\sigma_n}u_n \rangle_{\HH})}  \le C \sqrt{ {\rm Var} ( \langle DF_n,  u_n \rangle_{\HH})},  \label{yn2.est}
\end{align}
for an absolute constant $C$. We now observe that there exists a sequence $\{g^{[m]} : m\geq 1\}\subset  \mathbb{D}^{3,4}(\RR, \gamma)$ such that $g^{[m]} \to g$ in the  $\mathbb{D}^{1,4}(\RR, \gamma)$ topology. For such a sequence of functions it is easily checked that, as $m\to\infty$
\begin{equation} \label{ecu1bis}
\|g^{[m]}\|_{1,4} \to \|g\|_{1,4}, \quad  \|g_1^{[m]}\|_{2,4} \to \|g_1\|_{2,4}.
\end{equation}
Moreover, denoting by $K(m,n)$ the quantity obtained from ${\rm Var} ( \langle DF_n,  u_n \rangle_{\HH})$ by replacing $g$ with $g^{[m]}$ one has that, as $m\to \infty$ and for each fixed $n$,
$$
K(m,n) \to {\rm Var} ( \langle DF_n,  u_n \rangle_{\HH}).
$$
This  follows from the fact that  for each $j\ge 1$, the sequences $g_1^{[m]}(X_j)$ and $(g^{[m]})'(X_j)$ converge in  $L^4(\Omega)$, as $m$ tends to infinity, to $g_1(X_j)$ and $g'(X_j)$, respectively, due to
the convergences (\ref{ecu1bis}).

The rest of the proof will then consist in showing that, for every function $g\in \mathbb{D}^{3,4}(\RR, \gamma)$,
\begin{equation}\label{toshow}
{\rm Var} ( \langle DF_n,  u_n \rangle_{\HH})\leq 
C n^{-1} \,\sum_{|k| \leq n} |\rho(k)| + C n^{-1} \left(\sum_{|k| \leq n} |\rho(k)|^{\frac{4}{3}}\right)^3,
\end{equation}
for constants $C$ that only depend on the $\mathbb{D}^{1,4}(\RR, \gamma)$ norm of $g$ and on the $\mathbb{D}^{2,4}(\RR, \gamma)$ norm of $g_1$   (recall that, by (\ref{ecu2}), $\|g_1\|_{2,4}\leq C\|g\|_{1,4}$).
\medskip

\underline{\it Step 2: Bounding ${\rm Var} ( \langle DF_n,  u_n \rangle_{\HH})$}. 
We have
 \begin{align*}
\Phi_n:=    \langle DF_n,u_n \rangle_{\HH} =\frac{1}{n} \sum_{i,j=1}^n g'(X_i)g_1(X_j)\rho(i-j).
    \end{align*}
    We can write
    \[
    \langle DF_n,u_n \rangle_{\HH} - \E(\langle DF_n,u_n \rangle_{\HH} )= \delta(- DL^{-1} \Phi_n)=:\delta(v_n).
    \]
    We will make use of the following estimate
    \begin{equation}\label{es314}
    {\rm Var} ( \langle DF_n,  u_n \rangle_{\HH})= \E( | \delta(v_n)|^2) \le   \| \E (v_n)\|^2_{\HH}  +   2 \E( \|  Dv_n\|^2_{\HH^{\otimes 2}}),
    \end{equation}
which can be justified as follows. First, by the isometry formula one has 
$ \E( | \delta(v_n)|^2) =   \E (\| v_n\|^2_{\HH})  +    \E( \|  Dv_n\|^2_{\HH^{\otimes 2}})$. Then, one can write
$ \E (\| v_n\|^2_{\HH}) =  \E (\| v_n - \E (v_n)\|^2_{\HH}) +  \| \E (v_n)\|^2_{\HH}$ and then apply Poincar\'e formula to the first term in the right-hand side to obtain (\ref{es314}).
  We will now proceed with the estimation of each member of the right-hand side of (\ref{es314}).

\medskip

\underline{\it Step 3: Estimating $\| \E (v_n)\|^2_{\HH}  $}. 
We first note that $\E(-DL^{-1}Z)=\E(DZ)$ for any $Z\in L^2(\Omega)$, as is immediately seen by expanding $Z$ into chaos.
We then have
\begin{align*}
\| \E (v_n)\|^2_{\HH}  & =\|  \E[D( \langle DF_n,u_n \rangle_{\HH})]\|^2_{\HH} \\
&= \frac 1 {n^2}   \sum_{i_1, \dots, i_4=1}^n  \langle \E[ D( g'(W(e_{i_1})) g_1 (W(e_{i_2}))) ] , \E[ D( g'(W(e_{i_3})) g_1 (W(e_{i_4 }))) ] \rangle_{\HH}\\
& \qquad \times
  \rho(i_1-i_2) \rho(i_3-i_4)  \\
  &= \frac 1 {n^2}   \sum_{i_1, \dots, i_4=1}^n  \Bigg(  \E [g''(W(e_{i_1})) g_1 (W(e_{i_2}))]  \E  [g''(W(e_{i_3})) g_1 (W(e_{i_4 })) ] \\
& \qquad \times
 \rho(i_1-i_3) \rho(i_1-i_2) \rho(i_3-i_4)  \\
 & \qquad +\E [g'(W(e_{i_1})) (g_1)' (W(e_{i_2}))]  \E  [g''(W(e_{i_3})) g_1 (W(e_{i_4 })) ] 
 \rho(i_2-i_3) \rho(i_1-i_2) \rho(i_3-i_4)  \\
  & \qquad +\E [g''(W(e_{i_1})) g_1 (W(e_{i_2}))]  \E  [g'(W(e_{i_3})) (g_1)' (W(e_{i_4 })) ] 
 \rho(i_1-i_4) \rho(i_1-i_2) \rho(i_3-i_4)  \\
   & \qquad +\E [g'(W(e_{i_1})) (g_1)' (W(e_{i_2}))]  \E  [g'(W(e_{i_3})) (g_1)' (W(e_{i_4 })) ] 
 \rho(i_2-i_4) \rho(i_1-i_2) \rho(i_3-i_4) \Bigg).
\end{align*}
Notice that we have three covariance factors. We need two additional factors that will be produced by the representation as a divergence of
$g'(W(e_i))$ and $g_1(W(e_i))$. That is, we can write
\begin{align*}
\E [g''(W(e_{i_1})) g_1 (W(e_{i_2}))]& =\E [g''(W(e_{i_1})) \delta (g_2 (W(e_{i_2})) e_2)] \\
&=\E [g^{(3)}(W(e_{i_1})) g_2 (W(e_{i_2}))] \rho(i_1-i_2).
\end{align*}
and
\begin{align*}
\E [g'(W(e_{i_1})) (g_1)' (W(e_{i_2}))]& =\E [\delta(g'_1(W(e_{i_1})) e_1) (g_1)' (W(e_{i_2})) ] \\
&=\E [ g'_1(W(e_{i_1})) (g_1)'' (W(e_{i_2}))] \rho(i_1-i_2).
\end{align*}
We claim that the expectations  $\E [g^{(3)}(W(e_{i_1})) g_2 (W(e_{i_2}))]$ and $\E [ g'_1(W(e_{i_1})) (g_1)'' (W(e_{i_2}))] $ are bounded.
 Indeed, using the expansion of $g$ in Hermite polynomials, we have
 \begin{align*}
| \E [g^{(3)}(W(e_{i_1})) g_2 (W(e_{i_2}))] |&=  \left|\E \left( \sum_{q=3} ^\infty q(q-1)(q-2)  c_q H_{q-3} (W(e_{i_1}))
  \sum_{q=2} ^\infty   c_q H_{q-2} (W(e_{i_2})) \right) \right| \\
  & =  \left|\sum_{q=3} ^\infty  c_q c_{q-1} q(q-1)(q-2) (q-3)!   \rho^{q-3}(i_1-i_2) \right|\\
  &\le  \sum_{q=3} ^\infty  |c_q c_{q-1}| q! 
  \le 
 \left[ \sum_{q=3} ^\infty  c^2_q q! \sum_{q=1} ^\infty c^2_{q}  (q+1)q! \right] ^{\frac 12},
 \end{align*}
 which is finite because
 the last quantity is precisely $   \| g\|_{L^2(\R,\gamma)}  \|g\|_{\DD^{1,2} (\R,\gamma)} $.

 The term $\E [ g'_1(W(e_{i_1})) (g_1)'' (W(e_{i_2}))] $ can be handled in the same way.
 As a consequence,
 \begin{align*}
\| \E (v_n)\|^2_{\HH}  & \le 
 \frac C {n^2}   \sum_{i_1, \dots, i_4=1}^n   (|\rho|(i_1-i_3) +|\rho|(i_2-i_3) +|\rho|(i_1-i_4)+| \rho|(i_2-i_4) ) \rho^2(i_1-i_2) \rho^2(i_3-i_4)  \\
& \le \frac C n  \sum_{|k| \le n} |\rho(k)|.
 \end{align*}

\medskip

\underline{\it Step 4: Estimating  $ \E( \|  Dv_n\|^2_{\HH^{\otimes 2}})$}.    We have
\begin{align*}
\alpha_n:&= \E( \|  Dv_n\|^2_{\HH^{\otimes 2}})= \E( \|  D^2 L^{-1} \langle DF_n, u_n \rangle_{\HH} \|^2_{\HH^{\otimes 2}})  
=\E( \|  K L^{-1}  D^2  \langle DF_n, u_n \rangle_{\HH} \|^2_{\HH^{\otimes 2}}),
\end{align*}
where $K$ is the operator defined by $KG=\sum_{m=0}^\infty \frac{m}{m+2}I_m(g_m)$ for $G=\sum_{m=0}^\infty I_m(g_m)\in L^2(\Omega)$.
Since $K$ is bounded in $L^p(\Omega)$ for all $p>1$ (see \cite[Theorem 1.4.2]{nualartbook}), we obtain
\[
\alpha_n  \le C      \E( \|   L^{-1}  D^2  \langle DF_n, u_n \rangle_{\HH} \|^2_{\HH^{\otimes 2}})
\]
We have
\begin{align*}
L^{-1}D^{2} ( \langle DF_n,u_n \rangle_{\HH} ) &= \frac 1n  \sum_{i,j=1}^n   L^{-1}D^2[  g'(W(e_i)) g_1(W(e_j))] \rho(i-j)\\
& = \frac 1n \sum_{k=0}^2  {2\choose k}\sum_{i,j=1}^n   L^{-1} [ g^{(k+1)} (W(e_i)) (g_1)^{(2-k)} (W(e_j))] e_i^{\otimes k} \otimes e_j^{\otimes (2-k)}\rho(i-j).
\end{align*}
Therefore,
\begin{align*}
& \E \left( \| L^{-1}D^{2} ( \langle DF_n,u_n \rangle_{\HH} )  \| ^2_{\HH^{\otimes2}} \right)\\
 &\le  \frac C  {n^2}  \sum_{k=0}^2   \E \left( \left \| \sum_{i,j=1}^n   L^{-1} [ g^{(k+1)} (W(e_i)) (g_1)^{(2-k)} (W(e_j))] e_i^{\otimes k} \otimes e_j^{\otimes (2-k)}\rho(i-j)  \right \| ^2_ {\HH^{\otimes 2} }\right)\\
 &= \frac C {n^2}  \sum_{k=0}^2    \sum_{i_1, i_2,i_3,i_4=1}^n   \E \left[  L^{-1} [ g^{(k+1)} (W(e_{i_1})) (g_1)^{(2-k)} (W(e_{i_2} )) ]  L^{-2} [ g^{(k+1)} (W(e_{i_3})) (g_1)^{(2-k)} (W(e_{i_4} )) ]
  \right]  \\
  &\qquad \times  \rho^k(i_1-i_3) \rho^{2-k}(i_2-i_4)\rho(i_1-i_2)  \rho(i_3-i_4) \\
  &=:   \sum_{k=0}^2  \frac C {n^2} \sum_{i_1, i_2,i_3,i_4=1}^n \beta_{k,n}  \rho^k(i_1-i_3) \rho^{2-k}(i_2-i_4)\rho(i_1-i_2)  \rho(i_3-i_4).
\end{align*}
We split the analysis on the different values of $k$.

\medskip
\noindent
{\it Case $k=1$}. \quad  We have
\begin{align*}
|\beta_{1,n}|&= |\E \left[  L^{-1} [ g'' (W(e_{i_1})) (g_1)' (W(e_{i_2} )) ]  L^{-1} [ g'' (W(e_{i_3})) (g_1)' (W(e_{i_4} )) ]
  \right] | \\
  &\le  \|L^{-1} [ g'' (W(e_{i_1})) (g_1)' (W(e_{i_2} )) ] \|_2  \|L^{-1} [ g'' (W(e_{i_3})) (g_1)' (W(e_{i_4} )) ]\|_2.
  \end{align*}
We can write
\[
g'' (W(e_{i_1}))=  \langle D[g' (W(e_{i_1}))], e_1 \rangle_{\HH}.
\]
As a consequence,
\begin{align*}
\|L^{-1} [ g'' (W(e_{i_1})) (g_1)' (W(e_{i_2} )) ] \|_2& =\| \langle   L^{-1} ( D[g' (W(e_{i_1}))] (g_1)' (W(e_{i_2} )) ) , e_1 \rangle_{\HH}  \|_2 \\
&\le  \|    L^{-1} ( D[g' (W(e_{i_1}))] (g_1)' (W(e_{i_2} )) ) \|_{ L^2(\Omega; \HH)}.
\end{align*}
This quantity is uniformly bounded  by a constant times $\| g\|_{\DD^{1,4} (\RR, \gamma)}
\| g_1\|_{\DD^{2,4} (\RR, \gamma)}$, due to Lemma \ref{lem1}  (1)  applied to
$F:=g' (W(e_{i_1})) \in L^4(\Omega)$ and $G:=(g_1)' (W(e_{i_2} )) \in \DD^{1,4}$
and taking into account that
$ \|F\|_4 \le \| g\|_{\DD^{1,4} (\RR, \gamma)} $ and 
\[
\| G\|_{1,4} = \| (g_1)'(W(e_{i_2})) \|_{1,4}   \le  \| g_1(W(e_{i_2})) \|_{2,4}  = \| g_1\|_{\DD^{2,4} (\RR, \gamma)}.
\]
    Therefore,
    \begin{align*}
 &  \frac 1{n^2}   \sum_{i_1, i_2,i_3,i_4=1}^n  |\beta_{1,n}|  | \rho(i_1-i_3) \rho(i_2-i_4)\rho(i_1-i_2)  \rho(i_3-i_4)|  \\
 & \quad \le  \frac 1n \sum_{|k_i| \le n, i=1,2,3}  |\rho(k_1) \rho(k_2)\rho(k_3)\rho(k_2+k_3-k_1)| \le \frac  C {n}  \left( \sum_{|k| \le n} |\rho(k)| ^{\frac 43} \right)^3,
    \end{align*}
    where the last inequality follows from  Lemma \ref{lem2.1}.
    
    \medskip
\noindent
{\it Case $k=0$}. \quad  We have
\begin{align*}
|\beta_{0,n}|&= |\E \left[  L^{-1} [ g' (W(e_{i_1})) (g_1)'' (W(e_{i_2} )) ]  L^{-1} [ g' (W(e_{i_3})) (g_1)'' (W(e_{i_4} )) ] |
  \right] | \\
  &= |\E \left[  g' (W(e_{i_1})) (g_1)'' (W(e_{i_2} ))   L^{-2} [ g' (W(e_{i_3})) (g_1)'' (W(e_{i_4} )) ] \right] |.
  \end{align*}
  We know that   $ g' (W(e_{i_1}))$ is centered and belongs to $L^4(\Omega)$. Moreover,
  \[
  (g_1)'' (W(e_{i_2} ))   L^{-2} [ g' (W(e_{i_3})) (g_1)'' (W(e_{i_4} )) ]
  \]
  belongs to $L^{\frac 43}(\Omega)$.  Indeed, using H\"older inequality, we can write
  \begin{align*}
 & \| (g_1)'' (W(e_{i_2} ))   L^{-2} [ g' (W(e_{i_3})) (g_1)'' (W(e_{i_4} ))] \|_{\frac 43}  \\
 & \quad \le
    \| (g_1)'' (W(e_{i_2} )) \|_4 \|  L^{-2} [ g' (W(e_{i_3})) (g_1)'' (W(e_{i_4} ))] \|_{2} \\
    &\quad \le   C   \| (g_1)'' (W(e_{i_2} )) \|_4  \| g' (W(e_{i_3}))\|_4 \| (g_1)'' (W(e_{i_4} ))\|_4 \\
    & \quad  \le C \| g_1 (W(e_{i_2} )) \|_{2,4}  \| g (W(e_{i_3}))\|_{1,4} \| g_1 (W(e_{i_4} ))\|_{2,4}\\
    & \quad = C \| g\|_{\DD^{1,4} (\RR,\gamma)}\| g_1\|^2_{\DD^{2,4} (\RR,\gamma)}.
   \end{align*} 
  Therefore, by   Gebelein's inequality (see Lemma \ref{Geb}), we deduce
  \begin{align*}
  |\beta_{0,n}|  &\le   (|\rho( (i_1-i_2)|+ |\rho( (i_1-i_3)| + |\rho( (i_1-i_4)|)  \\
  &   \qquad \times  \| g' (W(e_{i_1}))\|_4  \| (g_1)'' (W(e_{i_2} ))   L^{-2} [ g' (W(e_{i_3})) (g_1)'' (W(e_{i_4} ))]\|_{4/3} \\ 
  & \le C  (|\rho( (i_1-i_2)|+ |\rho( (i_1-i_3)| + |\rho( (i_1-i_4)|).
  \end{align*}
    Therefore,
    \begin{align*}
 &  \frac 1{n^2}   \sum_{i_1, i_2,i_3,i_4=1}^n  |\beta_{0,n}|  |  \rho(i_2-i_4)^2\rho(i_1-i_2)  \rho(i_3-i_4)|  \\
  & \le   \frac C{n^2}   \sum_{i_1, i_2,i_3,i_4=1}^n   (|\rho( (i_1-i_2)|+ |\rho( (i_1-i_3)| + |\rho( (i_1-i_4)|)    |  \rho(i_2-i_4)^2\rho(i_1-i_2)  \rho(i_3-i_4)| \\
 & \quad =  \frac C{n^2}\sum_{i_1, i_2,i_3,i_4=1}^n        \rho^2(i_2-i_4)\rho^2(i_1-i_2) | \rho(i_3-i_4)|  \\
  & \quad  \qquad +  \frac C{n^2}\sum_{i_1, i_2,i_3,i_4=1}^n        \rho^2(i_2-i_4) | \rho(i_1-i_3) \rho(i_1-i_2)   \rho(i_3-i_4)|  \\
    & \quad  \qquad +  \frac C{n^2}\sum_{i_1, i_2,i_3,i_4=1}^n        \rho^2(i_2-i_4) | \rho(i_1-i_4) \rho(i_1-i_2)   \rho(i_3-i_4)| \\
    & \quad =: A_n +B_n +C_n.
        \end{align*}
        For $A_n$, we have
        \[
        A_n \le   \frac Cn  \sum_{|k| \le n} | \rho(k)|.
        \]
        The terms $B_n$ and $C_n$ are similar. For $B_n$, we have
         \begin{align*}   
B_{n } &\le  \frac Cn  \sum_{|k_i| \le n, i=1,2,3}  | \rho(k_1)\rho(k_2) \rho(k_3)|\rho^2(k_1+ k_2+k_3) \le \frac  C {n}   \sum_{|k| \le n} | \rho(k)|,
    \end{align*}
    where we have applied Lemma  \ref{lem2.2} in the last inequality.
    
      \medskip
\noindent
{\it Case $k=2$}. \quad  We have
\begin{align*}
\beta_{2,n}&= \E \left[  L^{-1} [ g^{(3)} (W(e_{i_1})) g_1(W(e_{i_2} )) ]  L^{-1} [ g^{(3)} (W(e_{i_3})) g_1 (W(e_{i_4} )) ] 
  \right] | \\
  &=\E \left[  g^{(3)} (W(e_{i_1})) g_1 (W(e_{i_2} ))   L^{-2} [ g^{(3)} (W(e_{i_3})) g_1 (W(e_{i_4} )) ] \right]  \\
  & = \E \left[    L(g_1 (W(e_{i_2} ))) L^{-1} \{ g^{(3)} (W(e_{i_1}))     L^{-2} [ g^{(3)} (W(e_{i_3})) g_1 (W(e_{i_4} )) ]  \}\right] .
  \end{align*}
We know that     $ L(g_1 (W(e_{i_2})))$ is centered and 
 \begin{equation} \label{ecu6}
 \| L(g_1 (W(e_{i_2}))) \|_4 \le C \| g_1\| _{\DD^{2,4} (\R,\gamma)}.
 \end{equation}
  Moreover,
the random variable $   L^{-1} \{ g^{(3)} (W(e_{i_1}))     L^{-2} [ g^{(3)} (W(e_{i_3})) g_1 (W(e_{i_4} )) ]  \}$
  belongs to $L^{\frac 43}(\Omega)$. Indeed, its $L^{\frac 43}(\Omega)$-norm can be estimated as follows
  \begin{align}  \notag
& \|     L^{-1} \{ g^{(3)} (W(e_{i_1}))     L^{-2} [ g^{(3)} (W(e_{i_3})) g_1 (W(e_{i_4} )) ]  \} \| _{\frac 43} \\ \notag
&= \|     L^{-1} \{  \langle D^2(g' (W(e_{i_1}))) , e_{i_1}  \otimes e_{i_1}\rangle_{\HH}     L^{-2} [ g^{(3)} (W(e_{i_3})) g_1 (W(e_{i_4} )) ]  \} \| _{\frac 43} \\  \label{ecu3}
& \le  \|     L^{-1} \{  D^2(g' (W(e_{i_1})))     L^{-2} [ g^{(3)} (W(e_{i_3})) g_1 (W(e_{i_4} )) ]  \} \| _{L^\frac 43(\Omega ; \HH^{\otimes 2})}.
\end{align} 
  By Lemma \ref{lem1} (2) applied to
 $F= g'(W(e_{i_1})  $ and $G=   L^{-2} [ g^{(3)} (W(e_{i_3})) g_1 (W(e_{i_4} )) ]$, we have
  \begin{align} \notag
&\|     L^{-1} \{  D^2(g' (W(e_{i_1})))     L^{-2} [ g^{(3)} (W(e_{i_3})) g_1 (W(e_{i_4} )) ]  \} \| _{L^\frac 43(\Omega ; \HH^{\otimes 2})} \\  \label{ecu4}
&\le C \| g' (W(e_{i_1})) \| _4 \| L^{-2} [ g^{(3)} (W(e_{i_3})) g_1 (W(e_{i_4} )) ]\|_{2,2}.
\end{align}
Then Meyer inequalities (see (\ref{meyer})) imply that
\[
\| L^{-2} [ g^{(3)} (W(e_{i_3})) g_1 (W(e_{i_4} )) ]\|_{2,2} \le
  C   \| L^{-1} [ g^{(3)} (W(e_{i_3})) g_1 (W(e_{i_4} )) ]\|_{2}.
\]
We can write
\begin{align*}
\| L^{-1} [ g^{(3)} (W(e_{i_3})) g_1 (W(e_{i_4} )) ]\|_{2}& =
\| L^{-1} [  \langle D^2 (g' (W(e_{i_3})), e_{i_3} \otimes e_{i_3} \rangle_{\HH^{\otimes 2}} g_1 (W(e_{i_4} )) ]\|_{2} \\ 
&\le  \| L^{-1} [    D^2 (g' (W(e_{i_3})) g_1 (W(e_{i_4} )) ]\|_{L^2(\Omega; \HH^{\otimes 2} }.
\end{align*}
Then, a further application of Lemma \ref{lem1}  (2) to $F=g' (W(e_{i_3})$ and $G=g_1 (W(e_{i_4} ))$, yields
\begin{equation} \label{ecu5}
\| L^{-1} [    D^2 (g' (W(e_{i_3})) g_1 (W(e_{i_4} )) ]\|_{L^2(\Omega; \HH^{\otimes 2} }\le
 \| g' (W(e_{i_3})\|_4 \|  \| g_1 (W(e_{i_4} ))\|_{2,4}.
 \end{equation}
Thus, from (\ref{ecu3}), (\ref{ecu4}) and  (\ref{ecu5}) we deduce
\begin{equation} \label{ecu7}
 \|     L^{-1} \{ g^{(3)} (W(e_{i_1}))     L^{-2} [ g^{(3)} (W(e_{i_3})) g_1 (W(e_{i_4} )) ]  \} \| _{\frac 43}
 \le C    \| g\| ^2_{\DD^{1,4} (\R,\gamma)}  \| g_1\| _{\DD^{2,4} (\R,\gamma)}.
 \end{equation}
  Therefore, by   Gebelein's inequality (see Lemma \ref{Geb}), and the bounds  (\ref{ecu6}) and (\ref{ecu7}), we obtain
  \begin{align*}
  |\beta_{2,n}|  &\le   (|\rho( i_1-i_2)|+ |\rho( i_1-i_3)| + |\rho( i_1-i_4)|)  \\
  &   \qquad \times  \| L(g_1 (W(e_{i_1})))\|_4  \|L^{-1} [ g^{(3)} (W(e_{i_2} ))   L^{-2} [ g^{(3)}(W(e_{i_3})) g_1 (W(e_{i_4} ))]]\|_{4/3}\\
  & \le C  
  (|\rho( i_1-i_2)|+ |\rho( i_1-i_3)| + |\rho( i_1-i_4)|).
  \end{align*}
As a  consequence,
    \begin{align*}
 &  \frac 1{n^2}   \sum_{i_1, i_2,i_3,i_4=1}^n  |\beta_{2,n}|  |  \rho(i_2-i_4)^2\rho(i_1-i_2)  \rho(i_3-i_4)|  \\
  & \le   \frac C{n^2}   \sum_{i_1, i_2,i_3,i_4=1}^n   (|\rho( i_1-i_2)|+ |\rho( i_1-i_3)| + |\rho( i_1-i_4)|)    |  \rho(i_2-i_4)^2\rho(i_1-i_2)  \rho(i_3-i_4)| \\
 & \quad =  \frac C{n^2}\sum_{i_1, i_2,i_3,i_4=1}^n        \rho^2(i_2-i_4)\rho^2(i_1-i_2) | \rho(i_3-i_4)|  \\
  & \quad  \qquad +  \frac C{n^2}\sum_{i_1, i_2,i_3,i_4=1}^n        \rho^2(i_2-i_4) | \rho(i_1-i_3) \rho(i_1-i_2)   \rho(i_3-i_4)|  \\
    & \quad  \qquad +  \frac C{n^2}\sum_{i_1, i_2,i_3,i_4=1}^n        \rho^2(i_2-i_4) | \rho(i_1-i_4) \rho(i_1-i_2)   \rho(i_3-i_4)| \\
    & \quad =: A_n +B_n +C_n.
        \end{align*}
        For $A_n$, we have
        \[
        A_n \le   \frac Cn  \sum_{|k| \le n} | \rho(k)|.
        \]
        The terms $B_n$ and $C_n$ are similar. For $B_n$, we have
         \begin{align*}   
B_{n } &\le  \frac Cn  \sum_{|k_i| \le n, i=1,2,3}  | \rho(k_1)\rho(k_2) \rho(k_3)|\rho^2(k_1+ k_2+k_3)\\
 & \quad \le \frac  C {n}   \sum_{|k_i| \le n} | \rho(k)|,
    \end{align*}
    where we have applied Lemma  \ref{lem2.2} in the last inequality.

\medskip

\underline{\it Step 5: end of the proof}. From Step 1, it suffices to show that $${\rm Var} ( \langle DF_n,  u_n \rangle_{\HH})\leq 
C n^{-1} \,\sum_{|k| \leq n} |\rho(k)| + C n^{-1} \left(\sum_{|k| \leq n} |\rho(k)|^{\frac{4}{3}}\right)^3.$$
By Step 2, we have  ${\rm Var} ( \langle DF_n,  u_n \rangle_{\HH})\leq \| \E (v_n)\|^2_{\HH} + 2\E(\|Dv_n\|^2_{\HH^{\otimes 2}})$.
In Step 3, it is shown that $\| \E (v_n)\|^2_{\HH}  \le \frac C n  \sum_{|k| \le n} |\rho(k)|$.
Finally, it is shown in Step 4 that $\| \E (v_n)\|^2_{\HH}  \le C n^{-1} \,\sum_{|k| \leq n} |\rho(k)| + C n^{-1} \left(\sum_{|k| \leq n} |\rho(k)|^{\frac{4}{3}}\right)^3$.
The proof of Theorem \ref{thm1} is thus complete.

\qed

       \begin{remark}
    We can show that both bounds in (\ref{ecu1}) are not comparable. In the particular case $|\rho(k)| \sim |k|^{-\alpha}$ as $|k| \rightarrow \infty$, with $\alpha >\frac 12$, we obtain:
    $$
      d_{\rm TV}(Y_n , Z)  \le
      \left\{
         \begin{array}{ll}  
             Cn^{1-2\alpha } & {\rm if} \quad \frac 12 <\alpha < \frac 23, \\
            C n^{-\frac \alpha 2}  & {\rm if} \quad  \frac 23 \le \alpha <1, \\
            C n^{-\frac 12} (\log n)^{\frac 12} & {\rm if} \quad  \alpha = 1,\\
                      C n^{-\frac 12}  & {\rm if} \quad  \alpha > 1.
         \end{array}
      \right.
    $$
    \end{remark}

 \section*{Appendix}
 
 The following elementary result is used in the Introduction.
 
 \begin{lemma}\label{l:simple}
 Let $\{\rho(k) : k\in \mathbb{Z}\}\in \ell^2 $, and let $0<\alpha <2$ and $\beta, \gamma>0$ be such that
 \begin{equation}\label{e:el}
 \frac{2-\alpha}{2} \leq \frac\gamma\beta.
 \end{equation}
 Then,
 $$
 \lim_{n\to\infty} \frac{1}{n^\gamma} \left( \sum_{|k|< n} |\rho(k)|^\alpha\right)^\beta = 0.
 $$
 \end{lemma}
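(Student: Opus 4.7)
The strategy is to combine H\"older's inequality with the tail summability coming from $\rho \in \ell^2$. For any integer $M\geq 1$, I would split
\[
\sum_{|k|<n} |\rho(k)|^\alpha = \underbrace{\sum_{|k|<M} |\rho(k)|^\alpha}_{=:\,C_M} \; + \; \underbrace{\sum_{M\leq |k|<n} |\rho(k)|^\alpha}_{=:\, R_{M,n}},
\]
where $C_M$ is a finite constant depending on $M$ only. To the tail $R_{M,n}$ I would apply H\"older's inequality with the conjugate exponents $p = 2/\alpha$ and $q = 2/(2-\alpha)$, together with the trivial bound $\#\{k : M\leq |k|<n\} \leq 2n$, to obtain
\[
R_{M,n} \leq \left(\sum_{M\leq |k|<n} |\rho(k)|^2\right)^{\alpha/2} (2n)^{(2-\alpha)/2} \leq A_M^{\alpha/2}\,(2n)^{(2-\alpha)/2},
\]
where $A_M := \sum_{|k|\geq M} |\rho(k)|^2$.

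Next I would use the elementary inequality $(a+b)^\beta \leq c_\beta (a^\beta + b^\beta)$, valid with $c_\beta := \max(1, 2^{\beta -1})$, to deduce
\[
\frac{1}{n^\gamma}\left(\sum_{|k|<n} |\rho(k)|^\alpha\right)^\beta \leq c_\beta\,\frac{C_M^\beta}{n^\gamma} + c_\beta\,2^{(2-\alpha)\beta/2}\,A_M^{\alpha\beta/2}\,n^{(2-\alpha)\beta/2 - \gamma}.
\]
By the hypothesis \eqref{e:el}, $(2-\alpha)\beta/2 - \gamma \leq 0$, so $n^{(2-\alpha)\beta/2 - \gamma}\leq 1$ for every $n\geq 1$. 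Letting first $n\to\infty$ (with $M$ fixed) kills the first term, and I get
\[
\limsup_{n\to\infty} \frac{1}{n^\gamma}\left(\sum_{|k|<n} |\rho(k)|^\alpha\right)^\beta \leq c_\beta\,2^{(2-\alpha)\beta/2}\,A_M^{\alpha\beta/2}.
\]
Since $\rho \in \ell^2$ implies $A_M \to 0$ as $M\to\infty$, letting $M\to\infty$ yields the claim.

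The one delicate point is the equality case $(2-\alpha)/2 = \gamma/\beta$ in \eqref{e:el}: a direct application of H\"older's inequality to the full sum $\sum_{|k|<n}|\rho(k)|^\alpha$, without the preliminary tail cutoff, would only produce boundedness of the ratio, not convergence to zero. The cutoff trick is precisely what converts the $\ell^2$-summability of $\rho$ into an arbitrarily small prefactor $A_M^{\alpha\beta/2}$, which then takes care of the borderline case.
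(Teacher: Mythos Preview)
Your proof is correct and follows essentially the same strategy as the paper's own argument: split the sum at a cutoff, apply H\"older's inequality to the tail to produce the factor $\left(\sum_{|k|\geq M}|\rho(k)|^2\right)^{\alpha/2}$, use the hypothesis \eqref{e:el} to control the power of $n$, and then send first $n\to\infty$ and afterwards $M\to\infty$. The only cosmetic difference is that the paper also applies H\"older to the head of the sum (yielding a term of order $m^{(2-\alpha)/2}/n^{\gamma/\beta}$), whereas you simply leave it as a finite constant $C_M$ divided by $n^\gamma$; both variants work equally well.
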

 \begin{proof}
 Write $T_n :=\frac{1}{n^\gamma} \left( \sum_{|k|< n} |\rho(k)|^\alpha\right)^\beta$, and let $m<n$. A straightforward application of H\"older inequality yields that, for some finite constant $C>0$ independent of $m,n$,
 \begin{eqnarray}
 \notag T_n &\leq& C\, \left\{ \frac{m^{(2-\alpha)/2}}{n^{\gamma/\beta}} + \frac{(n-m)^{(2-\alpha)/2}}{n^{\gamma/\beta}}\left( \sum_{|k|\geq m} |\rho(k)|^2\right)^{\alpha/2}\right\}^\beta\\ &\leq& C\, \left\{ \frac{m^{(2-\alpha)/2}}{n^{\gamma/\beta}} +\left( \sum_{|k|\geq m} |\rho(k)|^2\right)^{\alpha/2}\right\}^\beta,\label{e:qq}
\end{eqnarray}
where in the second inequality we have used \eqref{e:el}. Now fix $\epsilon>0$ and observe that, since $\rho \in\ell^2$, there exists an integer $m_0$ such that $$\left( \sum_{|k|\geq m_0} |\rho(k)|^2\right)^{\alpha/2} \leq \epsilon.$$ Setting $m=m_0$ in \eqref{e:qq} and letting $n\to\infty$, we eventually conclude that $\limsup_n T_n \leq C\epsilon^\beta$ for every $\epsilon>0$, and the conclusion follows.  
\end{proof}

\end{document}